\documentclass[12pt,reqno]{amsart}
\usepackage{indentfirst, amssymb, amsmath, amsthm, mathrsfs, setspace, indentfirst, enumerate,  mathrsfs, amsmath, amsthm}
\usepackage[bookmarksnumbered, colorlinks, plainpages]{hyperref}
\usepackage{mathrsfs}
\usepackage{cite}

\newtheorem*{theoA}{Theorem A}
\newtheorem*{theoB}{Theorem B}
\newtheorem*{theoC}{Theorem C}
\newtheorem*{theoD}{Theorem D}
\newtheorem*{theoE}{Theorem E}

\newtheorem*{cor A}{Corollary A}
\newtheorem*{cor B}{Corollary B}
\newtheorem{propo}{Proposition}[section]

\newtheorem{theo}{Theorem}[section]
\newtheorem{lem}{Lemma}[section]
\newtheorem{cor}{Corollary}[section]

\newtheorem{defi}{Definition}[section]
\newtheorem{rem}{Remark}[section]

\newcommand{\ol}{\overline}
\newcommand{\be}{\begin{equation}}
\newcommand{\ee}{\end{equation}}
\newcommand{\beas}{\begin{eqnarray*}}
\newcommand{\eeas}{\end{eqnarray*}}
\newcommand{\bea}{\begin{eqnarray}}
\newcommand{\eea}{\end{eqnarray}}

\numberwithin{equation}{section}
\begin{document}
\title[Univalence and Coefficient Bounds for Pluriharmonic Mappings]{On Stable Univalence and Coefficient Estimates for a Class of Pluriharmonic Mappings in Convex Reinhardt Domains}
\date{}
\author[M. B. A\MakeLowercase{hamed}, S. M\MakeLowercase{ajumder} \MakeLowercase{and} D. P\MakeLowercase{ramanik}]{M\MakeLowercase{olla} B\MakeLowercase{asir} A\MakeLowercase{hamed}$^*$, S\MakeLowercase{ujoy} M\MakeLowercase{ajumder} \MakeLowercase{and} D\MakeLowercase{ebabrata} P\MakeLowercase{ramanik}}

\address{Molla Basir Ahamed,
	Department of Mathematics,
	Jadavpur University,
	Kolkata-700032, West Bengal, India.}
\email{mbahamed.math@jadavpuruniversity.in}

\address{Sujoy Majumder, Department of Mathematics, Raiganj University, Raiganj, West Bengal-733134, India.}
\email{sm05math@gmail.com}

\address{Debabrata Pramanik, Department of Mathematics, Raiganj University, Raiganj, West Bengal-733134, India.}
\email{debumath07@gmail.com}

\renewcommand{\thefootnote}{}
\footnote{2010 \emph{Mathematics Subject Classification}: 32A10, 30C45, 30C62, 30C75.}
\footnote{\emph{Key words and phrases}: Pluriharmonic mappings,Convex Reinhardt domains,Several complex variables, Stable pluriharmonic univalence, Coefficient estimates, Starlike and Close-to-convex mappings.}
\footnote{*\emph{Corresponding Author}: Molla Basir Ahamed.}

\renewcommand{\thefootnote}{\arabic{footnote}}
\setcounter{footnote}{0}

\begin{abstract}
In this paper, we investigate the geometric properties of complex-valued pluriharmonic mappings defined over convex Reinhardt domains in $\mathbb{C}^n$. We first establish a multidimensional analogue of the Noshiro-Warschawski Theorem, providing sufficient conditions for the univalence of pluriharmonic mappings based on the real part of their partial derivatives. Furthermore, we introduce and study the class $\mathcal{B}_{\mathcal{H}_{n}^{0}}(M)$ of normalized pluriharmonic mappings, characterized by a specific bound on the sum of their second-order partial derivatives. We prove a one-to-one correspondence between this pluriharmonic class and a corresponding class of holomorphic functions, extending known results from the planar harmonic case to higher dimensions. Specifically, we show that a pluriharmonic mapping $f=h+\overline{g}$ is stable pluriharmonic univalent if and only if its holomorphic counterpart $F=h+g$ is stable holomorphic univalent on the unit polydisk $\mathbb{P}\Delta(0;1)$. Finally, we provide sharp coefficient estimates and sufficient conditions for functions to belong to the class $\mathcal{B}_{\mathcal{H}_{n}^{0}}(M)$. Our results generalize several classical theorems in the theory of univalent harmonic functions to the setting of several complex variables.
\end{abstract}
\thanks{Typeset by \AmS -\LaTeX}
\maketitle

\section{\bf Introduction and Preliminaries}
For $z=(z_1,\ldots,z_n)$ and $w=(w_1,\ldots,w_n)$ in $\mathbb{C}^{n}$, we denote $\langle z,w\rangle=z_1\ol w_1+\ldots+z_n \ol w_n$ and $||z||=\sqrt{\langle z,z\rangle}$. The absolute value of a complex number $z_1$ is denoted by $|z_1|$ and for $z\in\mathbb{C}^n$, we define
\begin{align*}
	||z||_{\infty}=\max\limits_{1\leq i\leq n}|z_i|.
\end{align*}

Throughout the paper, for $z=(z_1,\ldots,z_n)\in\mathbb{C}^n$ and $w=(w_1,\ldots,w_n)\in\mathbb{C}^{n}$, $p\in\mathbb{N}$ and $\lambda\in\mathbb{C}$, we define $z+w=(z_1+w_1,\ldots,z_n+w_n)$, $z.w=(z_1w_1,\ldots,z_n w_n)$, $\lambda z=(\lambda z_1,\ldots,\lambda z_n)$, $z^p=(z_1^p,\ldots,z_n^p)$ and $\ol z=(\ol z_1,\ldots,\ol z_n)$.

\smallskip
If $x=(x_1,\ldots,x_n)$ and $y=(y_l,\ldots,y_n)$ are vectors in $\mathbb{R}^{n}$, then we define $x\geq y$ if $x_j\geq y_j$ for all $j$ and $x>y$ if $x_j>y_j$ for all $j$.
A multi-index $\alpha=(\alpha_1,\ldots,\alpha_n)$ of dimension $n$ consists of n non-negative integers $\alpha_j,\;1\leq j\leq n$; the degree of a multi-index $\alpha$ is the sum $|\alpha|=\sum_{j=1}^n \alpha_j$ and we denote $\alpha!=\alpha_1!\ldots \alpha_n!$. For $z=(z_1,\ldots,z_n)\in\mathbb{C}^n$ and a multi-index $\alpha=(\alpha_1,\ldots,\alpha_n)$, we define 
\[z^{\alpha}=\prod\limits_{j=1}^n z_j^{\alpha_j}\;\;\text{and}\;\;|z|^{\alpha}=\prod\limits_{j=1}^n |z_j|^{\alpha_j}.\]

\begin{defi} An open set $\Omega\subset \mathbb{C}^n$ is called a \emph{Reinhardt} domain if $(z_1,z_2,\ldots,z_n)\in\Omega$ implies $\left(e^{i\theta_1}z_1,e^{i\theta_2}z_2,\ldots,e^{i\theta_n}z_n\right)\in \Omega$ for arbitrary real $\theta_1,\theta_2,\ldots,\theta_n$.
\end{defi}

\begin{defi} A set $\Omega\subset\mathbb{C}^n$ is said to be \emph{starlike} with respect to a point $z_0$ if $(1-t)z_0+tz\in \Omega$ for all $z\in \Omega$ and for all $t\in[0,1]$. The set $\Omega$ is said to be \emph{convex} if it is starlike with respect to each of its points, i.e., if $(1-t)z+tw\in\Omega$ for all $z,w\in\Omega$ and for all $t\in[0,1]$. $\Omega$ is said to be \emph{close-to-convex} if the complement of $\Omega$ can be written as union of non-intersecting half lines.
\end{defi}

\begin{defi}
An open polydisk (or open polycylinder) in $\mathbb{C}^n$ is a subset $\mathbb{P}\Delta(a;r)\subset \mathbb{C}^n$ of the form 
\[\mathbb{P}\Delta(a;r)=\prod\limits_{j=1}^n \Delta(a_j;r_j)=\lbrace z\in\mathbb{C}^n: |z_i-a_i|<r_i,\;i=1,2,\ldots,n\rbrace,\]
the point $a=(a_1,\ldots,a_n)\in\mathbb{C}^n$ is called the centre of the polydisk and $r=(r_1,\ldots,r_n)\in\mathbb{R}^n\;(r_i>0)$ is called the polyradius. It is easy to see that
\begin{align*}
	\mathbb{P}\Delta(0;1)=\mathbb{P}\Delta(0;1_n)=\prod\limits_{j=1}^n \Delta(0_n;1_n).
\end{align*}
\end{defi}

The closure of $\mathbb{P}\Delta(a;r)$ will be called the closed polydisk with centre $a$ and polyradius $r$ and will be denoted by $\ol{\mathbb{P}\Delta}(a;r)$. We denote by $C_k(a_k;r_k)$ the boundary of $\Delta(a_k;r_k)$, \textit{i.e.,} the circle of radius $r_k$ with centre $a_k$ on the $z_k$-plane. Of course $C_k(a_k,r_k)$ is represented by the usual parametrization 
\begin{align*}
	\theta_k\to \gamma(\theta_k)=a_k+r_ke^{i\theta_k},\; \mbox{where}, 0\leq \theta_k\leq 2\pi.
\end{align*} The product $C^n(a;r):=C_1(a_1;r_1)\times\ldots\times C_n(a_n;r_n)$ is called the determining set of the polydisk $\mathbb{P}\Delta(a;r)$.

\begin{rem} It is easy to conclude that an open polydisk $\mathbb{P} \Delta(a;r)\subset \mathbb{C}^n$ is a convex Reinhardt domain.
\end{rem} 

For $a=(a_1,\ldots,a_n)\in\mathbb{C}^{n}$ and $r>0$, we define
\[\mathbb{B}_n(a;r)=\{z\in\mathbb{C}^n: ||z-a||<r\}\;\;\text{and}\;\;\mathbb{B}_n(r)=\mathbb{B}_n(0,r).\]

Clearly, $\mathbb{B}_n=\mathbb{B}_n(1)$ is the unit ball in the complex space $\mathbb{C}^n$ of dimension $n$. The unit disk in
the complex plane is denoted by $\mathbb{D}$.\vspace{1.2mm}

A complex-valued function $f$ on a subset $\Omega\subset \mathbb{C}^n$ is a mapping from $\Omega$ into the complex plane. The value of the function $f$ at a point $z=(z_1,\ldots,z_n)\in \Omega$ is denoted by $f(z)=f(z_1,\ldots,z_n)$.
\begin{defi} If $f(z)=f(z_1,\ldots,z_n)$ is continuous in $\Omega\subset \mathbb{C}^n$ and holomorphic in each variable $z_k, k=1,\ldots,n$, separately, then $f(z)$ is said to be holomorphic in $\Omega$. We also call $f(z)=f(z_1,\ldots,z_n)$ a holomorphic function of $n$ variables $z_1,\ldots,z_n$.
\end{defi}
Here, by saying that $f(z_1,\ldots,z_k,\ldots,z_n)$ is holomorphic in $z_k$, separately, we mean that the function $f(z_1,\ldots,z_n)$ is a holomorphic function in $z_k$ when the other variables $z_1,\ldots,z_{k-1},z_{k+1},\ldots,z_n$ are fixed. We write
\begin{align*}
z_j = x_j + i y_j \qquad (i^2 = -1,\; j = 1,\dots,n),
\end{align*}
where \(x_j\) and \(y_j\) are real numbers. We set
\begin{align*}
f(z) = u(x,y) + i v(x,y),
\end{align*}
where $u(x,y)$ and $v(x,y)$ are the real and imaginary parts of $f(z)$; $x = (x_1,\dots,x_n)$ and $y = (y_1,\dots,y_n)$.
The Cauchy--Riemann equations for each $z_j\;(j=1,\dots,n)$ are
\begin{align}\label{Eq 1.1}
\frac{\partial u}{\partial x_j}
=
\frac{\partial v}{\partial y_j}\quad \text{and}\quad
\frac{\partial u}{\partial y_j}
=
-\,\frac{\partial v}{\partial x_j}
\qquad (j=1,\dots,n).
\end{align}

By differentiating (\ref{Eq 1.1}) with respect to $x_k$ and $y_k$, we see that both $u$ and $v$ satisfy the
following system of partial differential equations of second order:
\begin{align}\label{Eq 1.2}
\frac{\partial^2}{\partial x_j \partial x_k}
+
\frac{\partial^2}{\partial y_j \partial y_k}
= 0
\quad \text{and} \quad
\frac{\partial^2}{\partial x_j \partial y_k}
-
\frac{\partial^2}{\partial x_k \partial y_j}
= 0
\qquad (j,k = 1,\dots,n).
\end{align}

For a complex variable $z_j = x_j + i y_j$, we define
\begin{align}\label{Eq 1.3}
\frac{\partial}{\partial z_j}
= \frac{1}{2}\left( \frac{\partial}{\partial x_j}
- i \frac{\partial}{\partial y_j} \right),\quad
\frac{\partial}{\partial \bar z_j}
= \frac{1}{2}\left( \frac{\partial}{\partial x_j}
+ i \frac{\partial}{\partial y_j} \right).
\end{align}
\begin{align*}
\partial =\sum\limits_{j=1}^n \frac{\partial }{\partial z_j} d z_j\quad \text{and} \quad \ol{\partial} =\sum\limits_{j=1}^n \frac{\partial }{\partial z_j} d \ol {z}_j.
\end{align*}

Here, $dx_j$, $dy_j$ etc. are symbols of linearly independent vectors. Then condition (\ref{Eq 1.1}), that the complex-valued function $f(z_1,\dots,z_n)$ is differentiable with respect to the variable $z_j$,
becomes
\begin{align*}
\frac{\partial f(z)}{\partial \bar z_j} = 0,
\qquad j = 1,\dots,n.
\end{align*}

Let $f(z) = u(x,y) + i v(x,y)\;(x = (x_1,\dots,x_n), y = (y_1,\dots,y_n))$, where both $u$ and $v$ are of $C^2$-class. A direct calculation on (\ref{Eq 1.3}) shows that 
\begin{align}\label{Eq 1.5}
4\frac{\partial^2 f(z)}{\partial z_j \partial \bar z_k}=&\frac{\partial^2 u(x,y)}{\partial x_j x_k}+\frac{\partial^2 u(x,y)}{\partial y_j y_k}+i\left(\frac{\partial^2 u(x,y)}{\partial x_j x_k}+\frac{\partial^2 u(x,y)}{\partial y_j y_k}\right)\\&+i\left(\frac{\partial^2 u(x,y)}{\partial x_j y_k}-\frac{\partial^2 u(x,y)}{\partial x_k y_j}\right)-\left(\frac{\partial^2 v(x,y)}{\partial x_j y_k}-\frac{\partial^2 v(x,y)}{\partial x_k y_j}\right).\nonumber
\end{align}

We know that a function $f$ defined on an open subset $U\subset \mathbb{R}^n$ is said to be of $C^k$-class if $f$ is $k$-times continuously differentiable.

\begin{defi} A function $f:\Omega \to \mathbb{C}$ defined on an open set $\Omega\subset \mathbb{C}^n$ is said to be
holomorphic if $f$ is of $C^1$-class and satisfies
\begin{align}\label{Eq 1.4}
\ol{\partial} f=0,\quad \text{that is},\quad \frac{\partial f(z)}{\partial \bar z_j}=0
\end{align}
on $\Omega$ for all $j$.
\end{defi}

\begin{propo}\label{Pro:1} Suppose $f(z)$ is a holomorphic function in a convex Reinhardt domain $\Omega\subset \mathbb{C}^n$ such that $\frac{\partial f(z_0)}{\partial z_j}\neq 0$ for all $j=1,2,\ldots,n$, where $z_0\in\Omega$. Then $f(z)$ is univalent in some convex Reinhardt neighborhood of $z_0$.
\end{propo}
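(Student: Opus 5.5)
The plan is to localise the hypothesis and then run a several-variable Noshiro--Warschawski argument along line segments. Convexity of the neighbourhood is what lets us join any two of its points by a segment.

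\textbf{Step 1 (normalisation).} Put $a_j=\frac{\partial f(z_0)}{\partial z_j}\neq 0$ and write $a_j=|a_j|e^{i\varphi_j}$. Consider the rotated function
\[
\widetilde f(z)=f\bigl(e^{-i\varphi_1}z_1,\ldots,e^{-i\varphi_n}z_n\bigr).
\]
Because $\Omega$ is Reinhardt, this is again a holomorphic function on a convex Reinhardt domain. The chain rule gives $\partial_j\widetilde f=e^{-i\varphi_j}(\partial_j f)\circ R$, where $R$ is the rotation above. Hence at the rotated base point $\widetilde z_0=R^{-1}z_0$ every partial derivative $\partial_j\widetilde f(\widetilde z_0)=|a_j|$ is real and strictly positive. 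Univalence is invariant under this coordinatewise rotation, and the image of a convex Reinhardt set under $R$ is again convex Reinhardt. So it suffices to treat the case $a_j>0$ for every $j$.

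\textbf{Step 2 (choice of neighbourhood).} The partial derivatives $\partial f/\partial z_j$ are holomorphic, hence continuous. Choose $r>0$ with the following property: on the neighbourhood $U$ taken below, $\operatorname{Re}\frac{\partial f}{\partial z_j}>0$ and $\bigl|\frac{\partial f}{\partial z_j}-a_j\bigr|<\varepsilon a_j$ for every $j$, where $\varepsilon>0$ is small and fixed in advance. For $U$ I take the largest convex Reinhardt set (a polydisk or a complete Reinhardt piece of it) that contains $z_0$ and on which these inequalities persist. This is the point where Remark~1.1 on polydisks is used. Since $U$ is convex, for $z,w\in U$ we can write
\[
f(w)-f(z)=\sum_{j=1}^n (w_j-z_j)\int_0^1\frac{\partial f}{\partial z_j}\bigl(z+t(w-z)\bigr)\,dt
=\sum_{j=1}^n (w_j-z_j)\,A_j ,
\]
where each averaged derivative $A_j$ satisfies $\operatorname{Re}A_j>0$ and $|A_j-a_j|<\varepsilon a_j$.

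\textbf{Step 3 (nonvanishing) — the main obstacle.} It remains to show that $\sum_j (w_j-z_j)A_j\neq 0$ whenever $w\neq z$. In one variable this is immediate, since $\operatorname{Re}\bigl((w-z)^{-1}(f(w)-f(z))\bigr)>0$. In several variables the positivity of every $\operatorname{Re}A_j$ does not by itself prevent cancellation between the different terms of the sum. The route I would try first is to set $v=w-z$ and divide by the coordinate of $v$ with the largest modulus, say $v_k$. This gives
\[
\frac{f(w)-f(z)}{v_k}=A_k+\sum_{j\neq k}\frac{v_j}{v_k}A_j ,
\]
and then I would use the near-constancy $A_j\approx a_j$ to compare this quantity with $\sum_j a_j v_j/v_k$. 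One then has to see for which $v$ the sum can be kept away from zero. This requires shrinking $U$ according to the ratios of the $a_j$, and the Reinhardt symmetry must be used to handle the rotated directions.

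I expect exactly this cancellation issue to be the crux. If it cannot be excluded for arbitrary directions $v$, the argument shows that the univalence conclusion has to be read in the sense the paper adopts for mappings of several variables. That sense must be pinned down before the final inequality can close.
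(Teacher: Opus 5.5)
Your Step 3 is left open, and it cannot be closed. The cancellation you worry about really happens, and the proposition is false as stated.

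\textbf{Counterexample for $n\ge 2$.} Take $n=2$, $\Omega=\mathbb{P}\Delta(0;1)$, $f(z)=z_1+z_2$ and $z_0=0$. Both partial derivatives equal $1$, yet $f(t,-t)=f(0,0)$ for all $|t|<1$. So $f$ is injective on no neighbourhood of $z_0$. In your notation, $v=(t,-t)$, $A_1=A_2=1$ and $\sum_j v_jA_j=0$.

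Your plan of comparing $\sum_j A_jv_j$ with the linear part $\sum_j a_jv_j$ could only succeed if $|\sum_j a_jv_j|$ dominated $\varepsilon\sum_j|a_j||v_j|$. But for $n\ge2$ the functional $v\mapsto\sum_j a_jv_j$ has an $(n-1)$-dimensional kernel. Equivalently, by the implicit function theorem $f^{-1}(f(z_0))$ is a complex hypersurface through $z_0$. Hence no complex-valued holomorphic function is one-to-one on an open subset of $\mathbb{C}^n$ when $n\ge2$. The paper defines univalent as one-to-one, so there is no alternative reading to fall back on.

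\textbf{A second defect, in Step 2.} A Reinhardt set containing $z_0$ contains the whole torus orbit of $z_0$. If it is also convex, it contains the convex hull of that orbit, namely $\{z:|z_j|\le|z_{0,j}|,\ j=1,\dots,n\}$, which contains the origin. So continuity at $z_0$ never yields a small convex Reinhardt neighbourhood on which $|\partial f/\partial z_j-a_j|<\varepsilon a_j$. This breaks the statement even for $n=1$. On $\mathbb{D}$, $f(z)=z^2$ with $z_0=1/2$ has $f'(z_0)=1$. Yet every convex Reinhardt neighbourhood of $z_0$ in $\mathbb{D}$ is a disc $\{|z|<R\}$ with $R>1/2$, on which $f(z)=f(-z)$.

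\textbf{Comparison with the paper's proof.} The paper attacks exactly your obstacle with a rotation. It writes $a_j=|a_j|e^{i\theta_j}$ and $w_j-z_j=|w_j-z_j|e^{i\alpha_j}$, then replaces $z,w$ by $\tilde z_j=e^{-i(\alpha_j+\theta_j)}z_j$ and $\tilde w_j=e^{-i(\alpha_j+\theta_j)}w_j$. Each term $a_j(\tilde w_j-\tilde z_j)=|a_j||w_j-z_j|$ is then positive, nothing cancels, and it concludes $f(\tilde w)\neq f(\tilde z)$.

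The rotation depends on the pair $(z,w)$, so this says nothing about $f(w)$ versus $f(z)$. It only controls pairs whose difference $v$ satisfies $a_jv_j\ge0$ for every $j$. Such $v\neq 0$ never lie in the kernel of $v\mapsto\sum_j a_jv_j$, which is exactly where the counterexample lives. The paper's claim that a suitable convex Reinhardt neighbourhood $N(z_0)$ exists is also unjustified, for the reason given in Step 2 above.

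You were right to identify Step 3 as the crux and right not to force it; the correct outcome here is a counterexample, not a proof. Two things do survive:
\begin{enumerate}
\item The one-variable statement with an ordinary convex neighbourhood, such as a disc centred at $z_0$. There your Step 3 is immediate, since $\operatorname{Re}\bigl((f(w)-f(z))/(w-z)\bigr)>0$.
\item For $n\ge2$, the inverse function theorem for mappings $F:\Omega\to\mathbb{C}^n$ with $\det DF(z_0)\neq0$.
\end{enumerate}
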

\begin{rem}
This is a valid multidimensional extension of the local inverse mapping theorem logic. In one variable, $f'(z_0) \neq 0$ implies local univalence; in $n$ variables, the condition that all partial derivatives are non-zero at a point, combined with the continuity of those derivatives, ensures a neighborhood exists where the function remains injective (univalent).
\end{rem}
\begin{proof} Since $\frac{\partial f(z_0)}{\partial z_j}\neq 0$ and $\frac{\partial f(z)}{\partial z_j}$ is continuous at $z_0$ for all $j=1,2,\ldots,n$, there exists a convex Reinhardt neighborhood $N(z_0)$ such that
\begin{align}\label{Pro1.1}
\left|\frac{\partial f(z)}{\partial z_j}-\frac{\partial f(z_0)}{\partial z_j}\right|<\frac{1}{2}\left|\frac{\partial f(z_0)}{\partial z_j}\right|
\end{align}
for all $z\in N(z_0)$ and for all $j=1,2,\ldots,n$. Suppose $\frac{\partial f(z_0)}{\partial z_j}=r_je^{i\theta_j}$ for $j=1,2,\ldots,n$.
Let $z=(z_1,z_2,\ldots,z_n)\in N(z_0)$ and $w=(w_1,w_2,\ldots,w_n) \in N(z_0)$ such that $z \neq w$. Since $z\neq w$, for the sake of simplicity we may assume that $z_j\neq w_j$ for all $j=1,\ldots,n$. Let $w_j-z_j=r_j'e^{i\alpha_j}$, $j=1,\ldots,n$.
Since $\Omega$ is a Reinhardt domain, it follows that 
\begin{align*}
	\tilde z=\left(e^{-i(\alpha_1+\theta_1)}z_1,e^{-i(\alpha_2+\theta_2)}z_2,\ldots,e^{-i(\alpha_n+\theta_n)}z_n\right)\in N(z_0)
\end{align*} and 
\begin{align*}
	\tilde w=\left(e^{-i(\alpha_1+\theta_1)}w_1,e^{-i(\alpha_2+\theta_2)}w_2,\ldots,e^{-i(\alpha_n+\theta_n)}w_n\right)\in N(z_0).
\end{align*} Clearly, $\tilde z\neq \tilde w$. Then, because $N(z_0)$ is a convex domain, 
$\xi(t)=(1 - t) \tilde z + t\tilde w \in N(z_0)$ for $0 \le t \le 1$. Let $\varphi: [0,1]\to \mathbb{C}$ be defined by 
\begin{align*}
\varphi(t)=f(\xi(t))-\sum\limits_{j=1}^n \frac{\partial f(z_0)}{\partial z_j}. ((1-t)z_j+tw_j)e^{-i(\alpha_j+\theta_j)}.
\end{align*}
It is easy to see that
\begin{align*}
\varphi(1)=f(\tilde w)-\sum\limits_{j=1}^n \frac{\partial f(z_0)}{\partial z_j}.w_je^{-i(\alpha_j+\theta_j)}\quad \text{and}\quad \varphi(0)=f(\tilde z)-\sum\limits_{j=1}^n \frac{\partial f(z_0)}{\partial z_j}.z_je^{-i(\alpha_j+\theta_j)}.
\end{align*}

Note that
\begin{align*}
\varphi'(t)=&\sum\limits_{j=1}^n\frac{\partial f(\xi(t))}{\partial z_j}.(w_j-z_j)e^{-i(\alpha_j+\theta_j)}-\sum\limits_{j=1}^n \frac{\partial f(z_0)}{\partial z_j}.(w_j-z_j)e^{-i(\alpha_j+\theta_j)}\\=&\sum\limits_{j=1}^n\left( \frac{\partial f(\xi(t))}{\partial z_j}-\frac{\partial f(z_0)}{\partial z_j}\right)\cdot(w_j-z_j)e^{-i(\alpha_j+\theta_j)}.
\end{align*}
Since
\begin{align*}
\varphi(1)-\varphi(0)=\int_{0}^1 \frac{d \varphi(t)}{dt}dt,
\end{align*}
we have
\begin{align}\label{Pro1.2}
&f(\tilde w)-f(\tilde z)-\sum\limits_{j=1}^n \frac{\partial f(z_0)}{\partial z_j}\cdot(w_j-z_j)e^{-i(\alpha_j+\theta_j)}\\=&
\int\limits_{0}^1\left( \sum\limits_{j=1}^n\left(\frac{\partial f(\xi(t))}{\partial z_j}-\frac{\partial f(z_0)}{\partial z_j}\right)\cdot(w_j-z_j)e^{-i(\alpha_j+\theta_j)}\right) dt.\nonumber
\end{align}
Using (\ref{Pro1.1}) to (\ref{Pro1.2}), we obtain
\begin{align}\label{Pro1.3}
&\left|f(\tilde w)-f(\tilde z)-\sum\limits_{j=1}^n \left|\frac{\partial f(z_0)}{\partial z_j}\right|.|w_j-z_j|\right|\\\leq&
\int\limits_{0}^1\left( \sum\limits_{j=1}^n\left|\frac{\partial f(\xi(t))}{\partial z_j}-\frac{\partial f(z_0)}{\partial z_j}\right|\cdot|w_j-z_j|\right) dt\nonumber\\<&
\frac{1}{2}\sum\limits_{j=1}^n\left|\frac{\partial f(z_0)}{\partial z_j}\right|\cdot|w_j-z_j|.\nonumber
\end{align}
By the given condition, we have
\begin{align*}
\sum\limits_{j=1}^n\left|\frac{\partial f(z_0)}{\partial z_j}\right|.|w_j-z_j|>0.
\end{align*}
Thus, by the triangle inequality, we obtain from (\ref{Pro1.3}) that
\begin{align*}
|f(\tilde w)-f(\tilde z)|>\frac{1}{2}\sum\limits_{j=1}^n\left|\frac{\partial f(z_0)}{\partial z_j}\right|.|w_j-z_j|>0.
\end{align*}
Hence, $f$ is univalent in the convex Reinhardt neighborhood $N(z_0)$.
\end{proof}
\begin{rem}
	The proof of Proposition 1.1 utilizes the properties of convexity and Reinhardt domains to bridge the gap between local derivative properties and univalence within a neighborhood. This approach remains consistent with the generalized results for pluriharmonic mappings presented in Section 2.
\end{rem}
Let $f(z)$ be a holomorphic function in a domain $\Omega\subset \mathbb{C}^n$, and $c=(c_1,\ldots,c_n)\in \Omega$. Then in a polydisk $\mathbb{P}\Delta(c;r)\subset \Omega$ with centre $c$, $f(z)$ has a power series expansion in $z_1-c_1,\ldots,z_n-c_n$,
\begin{align*}
	f(z)&=\sum\limits_{\alpha_1,\alpha_2,\ldots,\alpha_n=0}^{\infty} a_{\alpha_1,\alpha_2,\ldots,\alpha_n}(z_1-c_1)^{\alpha_1}(z_2-c_2)^{\alpha_2}\ldots (z_n-c_n)^{\alpha_n}\\&=
	\sum\limits_{|\alpha|=0} a_{\alpha}(z-c)^{\alpha}=\sum\limits_{|\alpha|=0}^{\infty} P_{|\alpha|}(z-c),
\end{align*}
which is absolutely convergent in $\mathbb{P}\Delta(c;r)$, where the term $P_k(z-c)$ is a homogeneous polynomial of degree $k$. 
\vspace{1.2mm}
Let $G\not=\varnothing$ be an open subset of $\mathbb{C}^n$. Let $f$ be a holomorphic function on $G$. For a point $a\in\mathbb{C}^n$, we write $f(z)=\sum_{i=0}^{\infty}P_i(z-a)$, where the term $P_i(z-a)$ is either identically zero or a homogeneous polynomial of degree $i$. Denote the zero-multiplicity of $f$ at $a$ by $k=\min\{i:P_i(z-a)\not\equiv 0\}$. Clearly $1$ is the zero-multiplicity of $f$ at $a$ when $f(a)=0$ and $\frac{\partial f(a)}{\partial z_j}\neq 0$ for some $j=1,2,\ldots,n$.\vspace{1.2mm}

\begin{defi}
A real-valued function $\phi(x,y)$, where $x = (x_1,\dots,x_n)$ and $y = (y_1,\dots,y_n)$ is \emph{pluriharmonic} if it satisfies the conditions  (\ref{Eq 1.2}) is called \emph{pluriharmonic}. Thus a continuous complex-valued function $f(z)=u(x,y)+iv(x,y)$, where
$x = (x_1,\dots,x_n)$ and $y = (y_1,\dots,y_n)$ is a complex-valued \emph{pluriharmonic} function in a domain $\Omega\subset \mathbb{C}^n$, if both $u(x,y)$ and $v(x,y)$ are real-valued \emph{pluriharmonic} functions in $\Omega$.
If $u(x,y)$ and $v(x,y)$, where $x = (x_1,\dots,x_n)$ and $y = (y_1,\dots,y_n)$ satisfy (\ref{Eq 1.1}), then we call $v(x,y)$ a \emph{pluriharmonic conjugate} of $u(x,y)$.
\end{defi}

 Thus for functions $f(z)=u(x,y)+iv(x,y)$, where
$x = (x_1,\dots,x_n)$ and $y = (y_1,\dots,y_n)$ with continuous second order partial derivatives, it is clear from (\ref{Eq 1.4}) and (\ref{Eq 1.5}) that $\frac{\partial f(z)}{\partial z_j}$ is holomorphic on $\Omega$ for all $j$ if  $f(z)$ is \emph{pluriharmonic} function.

\smallskip
\begin{propo}
In a simply connected domain $\Omega\subset \mathbb{C}^n$, a complex-valued \emph{pluriharmonic} function $f(z)$ has the representation $f=h+\bar g$, where $h$ and $g$ are holomorphic in $\Omega$.
\end{propo}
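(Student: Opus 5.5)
The plan is to reduce to the real-valued case. Write $f=u+iv$ with $u,v$ real-valued pluriharmonic functions in $\Omega$. If each real pluriharmonic function $\phi$ on a simply connected $\Omega$ is the real part of a holomorphic function $F_\phi$, then
\begin{align*}
f=\operatorname{Re}F_u+i\operatorname{Re}F_v=\tfrac12(F_u+\ol{F_u})+\tfrac{i}{2}(F_v+\ol{F_v}).
\end{align*}
Setting $h=\tfrac12(F_u+iF_v)$ and $g=\tfrac12(\ol{\ol{F_u}+i\ol{F_v}})=\tfrac12(F_u-iF_v)$ gives $h+\ol g=f$. Both $h$ and $g$ are holomorphic, since $\ol{F_u}+i\ol{F_v}$ is the conjugate of the holomorphic function $F_u-iF_v$. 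So the whole statement reduces to constructing a pluriharmonic conjugate for a real pluriharmonic $\phi$.

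To build the conjugate, consider the real $1$-form
\begin{align*}
\omega=\sum_{j=1}^n\left(-\frac{\partial\phi}{\partial y_j}dx_j+\frac{\partial\phi}{\partial x_j}dy_j\right),
\end{align*}
which is $d^c\phi$ up to normalization. A function $\psi$ with $d\psi=\omega$ satisfies exactly the Cauchy--Riemann system (\ref{Eq 1.1}) for the pair $(\phi,\psi)$, and then $\phi+i\psi$ is holomorphic.

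The key computation is that $\omega$ is closed, and this is where the two families of equations in (\ref{Eq 1.2}) enter.
\begin{itemize}
\item The $dx_j\wedge dx_k$ component of $d\omega$ is $-\phi_{y_jx_k}+\phi_{y_kx_j}$, which vanishes by the second equation of (\ref{Eq 1.2}).
\item The $dy_j\wedge dy_k$ component is $\phi_{x_jy_k}-\phi_{x_ky_j}$, which vanishes by the same equation.
\item The $dx_k\wedge dy_j$ component is $\phi_{x_jx_k}+\phi_{y_jy_k}$, which vanishes by the first equation of (\ref{Eq 1.2}). The case $j=k$ is the ordinary Laplace equation in the $j$-th plane.
\end{itemize}
Here $\phi\in C^2$ is taken as part of the definition, so mixed partials commute.

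With $\omega$ closed, the Poincaré lemma on the simply connected domain $\Omega$ gives a primitive $\psi$. Concretely, fix $z^0\in\Omega$ and set $\psi(z)=\int_{\gamma}\omega$ along any piecewise smooth path $\gamma$ from $z^0$ to $z$. Path independence follows from Stokes' theorem together with the triviality of $\pi_1(\Omega)$, that is, $H^1_{dR}(\Omega)=0$. Then $\psi$ is $C^1$ with $d\psi=\omega$, so $F_\phi=\phi+i\psi$ is $C^1$ and satisfies $\partial F_\phi/\partial\bar z_j=0$ for all $j$. By the definition (\ref{Eq 1.4}), $F_\phi$ is holomorphic.

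The main obstacle is this global step: passing from closedness of $\omega$ to exactness. Locally a primitive exists on convex balls, so one could instead build $\psi$ on a chain of balls and patch along paths, using simple connectivity to rule out monodromy. I would cite de Rham's theorem, or the standard path-lifting argument, rather than reprove it. Everything else is the routine verification above.
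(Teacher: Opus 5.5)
Your proof is correct, but it takes a different route from the paper. The paper works directly with the complex-valued $f$. Pluriharmonicity makes each $\partial f/\partial z_j$ holomorphic, and the paper takes a holomorphic $h$ with $\partial h/\partial z_j=\partial f/\partial z_j$ for all $j$. It then sets $g=\ol f-\ol h$, which satisfies $\partial g/\partial\bar z_j=0$, so $f=h+\ol g$. You instead split $f=u+iv$, build a pluriharmonic conjugate for each real part by integrating the closed form $d^c\phi$, and recombine.

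The main thing your route gains is explicitness at the one nontrivial point. The paper simply assumes that $h$ exists. Justifying that existence is exactly the global step you isolate: the form $\partial f=\sum_j \frac{\partial f}{\partial z_j}\,dz_j$ is $d$-closed, since $d(\partial f)=\bar\partial\partial f=-\partial\bar\partial f=0$, and simple connectivity of $\Omega$ is needed to produce a global primitive. Your check that $d\omega=0$ also works straight from the defining system (\ref{Eq 1.2}). This matches how the paper defines a complex pluriharmonic function, through its real and imaginary parts. As a by-product you get that real pluriharmonic functions on simply connected domains are real parts of holomorphic functions.

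The paper's route, once the existence of $h$ is justified as above, is shorter: one complex primitive instead of two real ones, and no recombination algebra.
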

\begin{proof} We recall that if $f$ is a pluriharmonic mapping on $\Omega$, then $\frac{\partial f}{\partial z_j}$ is holomorphic in $\Omega$ for each $j=1, 2, \dots, n$. Let $h$ be a holomorphic function in $\Omega$ such that 
	\begin{align*}
		\frac{\partial h}{\partial z_j} = \frac{\partial f}{\partial z_j}\; \mbox{for}\; j=1, 2, \ldots, n.
	\end{align*}  Defining $g = \overline{f} - \overline{h}$, we observe that$$ \frac{\partial g}{\partial \overline{z}_j} = \overline{\frac{\partial f}{\partial z_j}} - \overline{\frac{\partial h}{\partial z_j}} = 0 \quad \text{in } \Omega, \quad j=1, 2, \dots, n, $$by the construction of $h$. This implies that $g$ is holomorphic in $\Omega$. Consequently, the pluriharmonic function $f$ admits the canonical representation $f = h + \overline{g}$, where both $h$ and $g$ are holomorphic in $\Omega$.
\end{proof}
\begin{rem}
	The significance of Proposition 1.2 lies in its role as a structural foundation for extending the theory of univalent harmonic mappings from the complex plane to $\mathbb{C}^n$. By establishing the $f = h + \overline{g}$ representation in simply connected domains, we provide a rigorous pathway to generalize the classical results of Clunie and Sheil-Small [2] and the stable univalence criteria of Hernandez and Martin [7] to the setting of several complex variables. This decomposition is instrumental in the proofs of our main results, including the multidimensional Noshiro-Warschawski Theorem (Theorem 2.1) and the stable univalence equivalence (Theorem 2.2).
\end{rem}

\medskip
Now we recall a result of univalent harmonic function on $\Omega\subset \mathbb{C}$ due to Ponnusamy et al. \cite{Ponnusamy-Yamamoto-Yanagihara-CVEE-2013}.

\begin{theoA}\cite[Lemma 1.1]{Ponnusamy-Yamamoto-Yanagihara-CVEE-2013} Suppose $f = h + \overline{g}$ is a harmonic mapping in a convex domain $\Omega\subset \mathbb{C}$ such that
\begin{align*}
\operatorname{Re}\!\left(e^{i\gamma} h'(z)\right) > |g'(z)| \quad \text{for all } z \in \Omega,
\end{align*}
and for some $\gamma \in \mathbb{R}$. Then $f$ is univalent in $\Omega$.
\end{theoA}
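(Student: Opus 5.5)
The plan is to reduce univalence to a one-variable comparison along the segment joining two distinct points, exactly as in the proof of Proposition~\ref{Pro:1}, but with the hypothesis $\operatorname{Re}(e^{i\gamma}h')>|g'|$ playing the role of the closeness of $f'$ to a nonzero constant. Fix distinct points $z_1,z_2\in\Omega$. Since $\Omega$ is convex, the segment $\xi(t)=(1-t)z_1+tz_2$, $t\in[0,1]$, lies in $\Omega$. Because $h$ and $g$ are holomorphic, hence $C^1$ on the segment, we can write
\begin{align*}
f(z_2)-f(z_1)=\int_0^1 \frac{d}{dt}f(\xi(t))\,dt=(z_2-z_1)\int_0^1 h'(\xi(t))\,dt+\overline{(z_2-z_1)}\int_0^1 \overline{g'(\xi(t))}\,dt .
\end{align*}
This follows because $\frac{d}{dt}h(\xi(t))=h'(\xi(t))(z_2-z_1)$ and $\frac{d}{dt}\overline{g(\xi(t))}=\overline{g'(\xi(t))}\,\overline{(z_2-z_1)}$.

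Next, multiply by $e^{i\gamma}/(z_2-z_1)$ and take real parts. Set $\omega=\overline{(z_2-z_1)}/(z_2-z_1)$, which satisfies $|\omega|=1$. Then
\begin{align*}
\operatorname{Re}\left(\frac{e^{i\gamma}(f(z_2)-f(z_1))}{z_2-z_1}\right)=\int_0^1\left[\operatorname{Re}\left(e^{i\gamma}h'(\xi(t))\right)+\operatorname{Re}\left(e^{i\gamma}\omega\,\overline{g'(\xi(t))}\right)\right]dt .
\end{align*}
For each $t$ we have $\operatorname{Re}(e^{i\gamma}\omega\overline{g'})\ge -|g'|$. By the hypothesis, the integrand is therefore bounded below by $\operatorname{Re}(e^{i\gamma}h'(\xi(t)))-|g'(\xi(t))|>0$. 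This lower bound is a continuous, strictly positive function on the compact interval $[0,1]$, so the integral is strictly positive. Hence $f(z_2)\neq f(z_1)$, and $f$ is univalent on $\Omega$.

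The only delicate point is justifying the differentiation of $f\circ\xi$ and the fundamental theorem of calculus. This is routine, since $h$ and $g$ are holomorphic on the open convex set $\Omega$, so $f\circ \xi$ is continuously differentiable on $[0,1]$. The key trick is the unimodular factor $\omega$. It absorbs the mismatch between the holomorphic part, which is multiplied by $z_2-z_1$, and the antiholomorphic part, which is multiplied by its conjugate. The strict inequality in the hypothesis then survives integration.
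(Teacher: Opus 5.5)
Your argument is correct and complete; it is the standard proof of this lemma. The paper does not prove Theorem A (it only quotes it). The nearest argument in the paper, the proof of Theorem~\ref{Th-2.1}, shares your skeleton: a segment, the fundamental theorem of calculus, and a real part taken after multiplying by $e^{i\gamma}$. It handles the phase of the displacement differently, though. Instead of dividing by $z_2-z_1$, it uses the Reinhardt symmetry to replace the points $z,w$ (your $z_1,z_2$) by $\tilde z=e^{-i\alpha}z$ and $\tilde w=e^{-i\alpha}w$, where $w-z=re^{i\alpha}$. Then $\tilde w-\tilde z=r>0$, and the holomorphic and antiholomorphic terms carry the same real factor.

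Your device is the better one here, for two reasons. First, it needs only convexity, whereas the rotation needs $e^{-i\alpha}\Omega=\Omega$. The only convex Reinhardt domains in $\mathbb{C}$ are the discs centred at $0$ and $\mathbb{C}$ itself, so that route cannot reach an arbitrary convex $\Omega$. Second, and more seriously, you prove $f(z_2)\neq f(z_1)$ for the pair you started with. The rotation argument as written only yields $f(\tilde w)\neq f(\tilde z)$, which concerns a different pair of points. Repairing it means running the estimate for $\zeta\mapsto f(e^{i\alpha}\zeta)$ along $[\tilde z,\tilde w]$, i.e.\ for $f$ along your original segment, and this produces exactly your factor $\omega=e^{-2i\alpha}$.

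The one thing your trick does not do is extend to $\mathbb{C}^n$, since it relies on the displacement being a single complex number. No argument can do that, though. For $n\ge 2$ the coordinatewise hypothesis of Theorem~\ref{Th-2.1} does not force univalence: $h(\zeta)=\zeta_1+\zeta_2$ and $g\equiv 0$ satisfy it with $\gamma=0$ on the unit polydisk, yet $h(t,-t)=0$ for all $|t|<1$.
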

Let $\mathcal{H}_n$ denote the class of complex-valued pluriharmonic functions $f$ in $\mathbb{P} \Delta(0;1)$, normalized by 
\begin{align*}
	f(0)= 0\; \mbox{and}\;\left(\frac{\partial f(0)}{\partial z_1},\ldots,\frac{\partial f(0)}{\partial z_n}\right)=(1,1,\ldots,1).
\end{align*} Obviously $\mathcal{H}=\mathcal{H}_1$ is the the class of complex-valued harmonic functions $f$ in the unit disk $\mathbb{D}=\{z\in\mathbb{C}: |z|<1\}$, normalized by $f(0)=0=\frac{\partial f(0)}{\partial z}-1$.
Each function $f$ in $\mathcal{H}$ can be expressed as $f=h+\ol g$, where $h$ and $g$ are holomorphic functions in $\mathbb{P} \Delta(0;1)$. Here $h$ and $g$ are called the holomorphic and co-holomorphic parts of $f$ respectively, and have power series representations
\begin{align}\label{Eq 1.6}
h(z)=\sum\limits_{k=1}^n z_j+\sum\limits_{k=2}^{\infty}\sum\limits_{|\alpha|=k} a_{\alpha} z^{\alpha}\quad {and}\quad g(z)=\sum\limits_{k=1}^{\infty}\sum\limits_{|\alpha|=k} b_{\alpha} z^{\alpha}.
\end{align}

Let $\mathcal{S}_{\mathcal{H}_n}$ be the subclass of $\mathcal{H}_n$ consisting of univalent (i.e. one-to-one) pluriharmonic mappings on $\mathbb{P} \Delta(0;1)$. Let
\begin{align*}
\mathcal{H}^0_n=\left\lbrace f\in\mathcal{H}_n: \ol{\partial} f(0)=0\right\rbrace
\end{align*}
and 
\begin{align*}
\mathcal{S}^0_{\mathcal{H}_n}=\left\lbrace f\in\mathcal{S}_{\mathcal{H}_n}: \ol{\partial} f(0)=0\right\rbrace.
\end{align*}

Hence for any function $f=h+\ol g$ in $\mathcal{H}^0_n$, its holomorphic and co-holomorphic parts can be represented by
\begin{align}\label{Eq 1.7}
h(z)=\sum\limits_{k=1}^n z_j+\sum\limits_{k=2}^{\infty}\sum\limits_{|\alpha|=k} a_{\alpha} z^{\alpha}\quad {and} \quad g(z)=\sum\limits_{k=2}^{\infty}\sum\limits_{|\alpha|=k} b_{\alpha} z^{\alpha}
\end{align}
respectively. We note that $\mathcal{S}_{\mathcal{H}_n}$ reduces to $\mathcal{S}$, the class of normalized univalent holomorphic functions in $\mathbb{P} \Delta(0;1)$ whenever the co-holomorphic part of $f$ is zero, i.e., $g(z)\equiv 0$ in $\mathbb{P} \Delta(0;1)$.
In 1984, Clunie and Sheil-Small \cite{Clunie-Sheil-Small-1984} investigated the class $\mathcal{S}_{\mathcal{H}}$, together with some geometric subclasses. Subsequently, the class $\mathcal{S}_{\mathcal{H}}$ and its subclasses have been extensively studied by several authors (see \cite{Ahamed-Allu-BMMSS-2022}-\cite{Bshouty-Joshi-Joshi-2013}, \cite{Clunie-Sheil-Small-1984,Kalaj-Ponnusamy-Vuorinen-2014, Hernandez-Martin-2013}).
\begin{defi}
	A complex-valued pluriharmonic mapping $f\in \mathcal{H}_n$ is said to be starlike if $f(\mathbb{P} \Delta(0;1))$ is a starlike
	domain with respect to the origin. We denote the class of pluriharmonic starlike functions in $\mathbb{P} \Delta(0;1)$ by $\mathcal{S}^*_{\mathcal{H}_n}$. A function $f$ in $\mathcal{H}_n$ is said to be convex if $f(\mathbb{P} \Delta(0;1))$ is convex. We denote the class of pluriharmonic convex mappings in $\mathbb{P} \Delta(0;1)$ by $\mathcal{K}_{\mathcal{H}_n}$. A function $f\in\mathcal{H}_n$ is said to be close-to-convex if $f(\mathbb{P} \Delta(0;1))$ is a close-to-convex domain. We denote the class of pluriharmonic close-to-convex mappings in $\mathbb{P} \Delta(0;1)$ by $\mathcal{C}_{\mathcal{H}_n}$. Finally, denote by ${\mathcal{S}^*_{\mathcal{H}_n}}^0$, $\mathcal{K}_{\mathcal{H}_n}^0$ and $\mathcal{C}_{\mathcal{H}_n}^0$, the
	subclasses of $\mathcal{S}^*_{\mathcal{H}_n}$, $\mathcal{K}_{\mathcal{H}_n}$ and $\mathcal{C}_{\mathcal{H}_n}$ with $\ol{{\bf \nabla}}f(0)=(0,0,\ldots,0)$ respectively.
\end{defi}
A pluriharmonic mapping $f=h+\overline{g}$ on the unit polydisk $\mathbb{P} \Delta(0;1)$ is said to be stably pluriharmonic univalent (respectively, stably pluriharmonic convex, stably pluriharmonic starlike with respect to the origin, or stably pluriharmonic close-to-convex) if all mappings $f_{\lambda}=h +\lambda \overline{g}$ with $|\lambda|=1$ are univalent (respectively, convex, starlike with respect to the origin, or close-to-convex) in $\mathbb{P} \Delta(0;1)$.
\vspace{1.2mm}

In 2013, Hernandez and Martin \cite{Hernandez-Martin-2013} proved the following result.
\begin{theoB}\cite[Proposition 2.1]{Hernandez-Martin-2013} A sense preserving harmonic mapping $f= h+\ol g$ on $\mathbb{D}$ is stable harmonic univalent if and only if the analytic function $F=h+g$ is stable analytic univalent on $\mathbb{D}$.
\end{theoB}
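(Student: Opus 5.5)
The plan is to reduce both notions of stable univalence to one and the same condition on pairs of points of $\mathbb{D}$. Fix distinct points $z_1,z_2\in\mathbb{D}$ and set
\begin{align*}
A=h(z_1)-h(z_2),\qquad B=g(z_1)-g(z_2).
\end{align*}
I will show that each of the two properties fails at the pair $(z_1,z_2)$, for some unimodular parameter, exactly when $|A|=|B|$. Since $z_1,z_2$ are arbitrary, this gives the equivalence asserted in Theorem B at once.

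\textbf{Harmonic side.} For $|\lambda|=1$, the mapping $f_\lambda=h+\lambda\overline{g}$ identifies $z_1$ and $z_2$ if and only if $A+\lambda\overline{B}=0$. If such a $\lambda$ exists, taking moduli gives $|A|=|B|$. Conversely, suppose $|A|=|B|$. If $A=B=0$, any $\lambda$ works. Otherwise $B\neq 0$, and $\lambda=-A/\overline{B}$ is unimodular and satisfies $A+\lambda\overline{B}=0$. Hence some $f_\lambda$ fails to be injective at $(z_1,z_2)$ precisely when $|A|=|B|$.

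\textbf{Analytic side.} For $|\mu|=1$, the function $F_\mu=h+\mu g$ identifies $z_1$ and $z_2$ if and only if $A+\mu B=0$. Exactly the same argument applies, now with $\mu=-A/B$ when $B\ne0$. So some $F_\mu$ fails to be injective at $(z_1,z_2)$ precisely when $|A|=|B|$.

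\textbf{Conclusion.} Combining the two sides, $f$ is stable harmonic univalent if and only if $|h(z_1)-h(z_2)|\neq|g(z_1)-g(z_2)|$ for all $z_1\ne z_2$ in $\mathbb{D}$. By the analytic side, the same condition is equivalent to $F=h+g$ being stable analytic univalent. This proves Theorem B.

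I do not expect a genuine obstacle. The only points needing care are the degenerate case $A=B=0$ and the interpretation of ``stable analytic univalent'' as univalence of every $h+\mu g$ with $|\mu|=1$. The sense-preserving hypothesis is not needed for the equivalence itself; it only ensures that $f$ lies in the class where stable harmonic univalence is defined.

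The same pointwise argument carries over verbatim to $\mathbb{P}\Delta(0;1)\subset\mathbb{C}^n$, or any domain, since it never uses the dimension. This is presumably how the paper will obtain its pluriharmonic analogue.
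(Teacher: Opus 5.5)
Your argument is correct, and it proves more than the paper's corresponding argument does. The paper does not prove Theorem B; it quotes it from Hernandez and Martin. Its own version of the argument is the proof of the $n$-variable analogue, Theorem~2.2, with the same key step but a different scope.

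That proof argues by contradiction in one direction only. Assuming every $f_\lambda$ is univalent and $F(z)=F(w)$ with $z\neq w$, it first disposes of the case $h(z)=h(w)$, which is your case $A=B=0$. Otherwise it sets $\theta=\arg(h(z)-h(w))$ and shows $f_\mu(z)=f_\mu(w)$ for $\mu=e^{2i\theta}$. This is exactly your choice $\lambda=-A/\overline{B}$ specialized to $B=-A$.

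Your reformulation of both properties as the single pairwise condition $|h(z_1)-h(z_2)|\neq|g(z_1)-g(z_2)|$ gives two things the paper's proof does not:
\begin{enumerate}
\item[(i)] It covers every rotation $h+\mu g$ with $|\mu|=1$, not only $F=h+g$. This gap in the paper is easily closed by replacing $g$ with $\mu g$.
\item[(ii)] It gives the converse implication, from stable univalence of $F$ to stable univalence of $f$, which the paper never addresses. Your converse uses the stability hypothesis on $F$ in an essential way, since the parameter $\mu=-A/B$ it produces is in general not $1$.
\end{enumerate}
As you remark, nothing in your argument depends on the domain or the dimension. So it also gives a complete, two-directional proof of Theorem~2.2.
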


\begin{defi} For $M>0$ and $z\in \mathbb{P} \Delta(0;1)$, let
\begin{align*}
\mathcal{B}_{\mathcal{H}_n^0}(M)=\left\lbrace f=g+\ol h\in \mathcal{H}_n^0: \sum\limits_{l=1}^n\sum\limits_{j=1}^n\sum\limits_{k=1}^n\left(\left|z_l\frac{\partial^2 h(z)}{\partial z_j\partial z_k}\right|+\left|z_l\frac{\partial^2 g(z)}{\partial z_j\partial z_k}\right|\right)\leq M \right\rbrace.
\end{align*}
\end{defi}

We will show that the class $\mathcal{B}_{\mathcal{H}_n^0}(M)$ is closely related to the class
\begin{align*}
\mathcal{B}_n(M)=\left\lbrace \phi\in\mathcal{A}: \sum\limits_{l=1}^n\sum\limits_{j=1}^n\sum\limits_{k=1}^n\left|z_l\frac{\partial^2 \phi(z)}{\partial z_j\partial z_k}\right|\leq M\;\;\text{for}\;\;z\in \mathbb{P} \Delta(0;1)\right\rbrace.
\end{align*}

Let $\mathcal{A}$ denote the class of holomorphic functions $f$ on the unit polydisk $\mathbb{P} \Delta(0;1)$ normalized by
	\begin{align*}
		f(0)= 0 \quad \text{and} \quad \nabla f(0) :=\left(\frac{\partial f(0)}{\partial z_1}, \ldots, \frac{\partial f(0)}{\partial z_n}\right).= (1, 1, \ldots, 1).
	\end{align*}
	In the case where $n=1$, we denote $\mathcal{B}_1(M)$ as $\mathcal{B}(M)$. The class $\mathcal{B}(M)$ was investigated by Mocanu \cite{Mocanu-1992}, Ponnusamy and Singh \cite{Ponnusamy-Singh-2002}, and Singh \cite{Singh-2001}. Subsequently, in 2009, the region of variability for functions in $\mathcal{B}(M)$ was studied by Ponnusamy et al. \cite{Ponnusamy-Allu-Vuorinen-2009}.\vspace{1.2mm}
	
	In $2018$, Ghosh and Allu \cite{Ghosh-Allu-2018} established a result providing a one-to-one correspondence between the classes $\mathcal{B}_{\mathcal{H}_n^0}(M)$ and $\mathcal{B}_n(M)$ for the case $n=1$.
\begin{theoC}\cite[Theorem 2.1]{Ghosh-Allu-2018} A harmonic mapping $f=h+\ol g$ is in $\mathcal{B}_{\mathcal{H}^0}(M)$ if, and only if, $F_{\varepsilon}=h+\varepsilon g$ is in $\mathcal{B}(M)$ for each $\varepsilon$ ($|\varepsilon|=1$).
\end{theoC}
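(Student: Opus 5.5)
The statement is Theorem C, the one-variable case: $f=h+\ol g\in\mathcal{B}_{\mathcal{H}^0}(M)$ if and only if $F_\varepsilon=h+\varepsilon g\in\mathcal{B}(M)$ for every $|\varepsilon|=1$. In this case the defining condition reads $|z h''(z)|+|z g''(z)|\le M$ for $z\in\mathbb{D}$, with $h(0)=g(0)=0$, $h'(0)=1$ and $g'(0)=0$. The plan is to prove each direction separately by pointwise arguments at a fixed $z$.

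For the forward implication, assume $f\in\mathcal{B}_{\mathcal{H}^0}(M)$ and fix $\varepsilon$ with $|\varepsilon|=1$. First check the normalization: $F_\varepsilon(0)=0$ and $F_\varepsilon'(0)=h'(0)+\varepsilon g'(0)=1$, so $F_\varepsilon\in\mathcal{A}$. Then the triangle inequality gives
\[
|zF_\varepsilon''(z)|\le |zh''(z)|+|z g''(z)|\le M ,
\]
which is exactly $F_\varepsilon\in\mathcal{B}(M)$.

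For the converse, assume $F_\varepsilon\in\mathcal{B}(M)$ for every unimodular $\varepsilon$, and fix $z\in\mathbb{D}$. If $zg''(z)=0$ or $zh''(z)=0$, take $\varepsilon=1$ and the bound follows directly. Otherwise, choose $\varepsilon$ (depending on $z$) so that $\varepsilon\, z g''(z)$ has the same argument as $z h''(z)$. Concretely,
\[
\varepsilon=\frac{h''(z)\,\ol{g''(z)}}{|h''(z)|\,|g''(z)|}.
\]
With this choice the triangle inequality becomes an equality:
\[
|zh''(z)|+|zg''(z)|=|z h''(z)+\varepsilon z g''(z)|=|zF_\varepsilon''(z)|\le M.
\]
Since $z$ was arbitrary, the bound holds on all of $\mathbb{D}$. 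The normalization $f\in\mathcal{H}^0$ also follows from the hypothesis. Taking $\varepsilon=\pm1$ gives $h'(0)+g'(0)=1$ and $h'(0)-g'(0)=1$, hence $h'(0)=1$ and $g'(0)=0$; likewise $h(0)=g(0)=0$, using the convention $g(0)=0$.

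The main obstacle is the converse. Its key point is that $\mathcal{B}(M)$ imposes its bound pointwise in $z$, so the hypothesis "for each $\varepsilon$" may be applied with an $\varepsilon$ that depends on the point $z$. This lets us align the phases and attain equality in the triangle inequality. One should also confirm that the hypothesis for all $\varepsilon$ is genuinely needed, and used here, to recover $g'(0)=0$.
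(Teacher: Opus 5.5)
Your proof is correct, but your converse follows a different route from the paper's. The paper only cites Theorem C. Its own argument for this equivalence is the proof of Theorem \ref{Th-1.3}. For $n=1$ that proof has the same forward direction as yours (the triangle inequality). In the converse, however, it uses only $\varepsilon=\pm1$ together with the parallelogram law.

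That step loses a factor. Write $a=zh''(z)$ and $b=zg''(z)$. The bounds $|a\pm b|\le M$ give $|a|^2+|b|^2\le M^2$, not $2(|a|^2+|b|^2)\le M^2$ as written there, and hence only $|a|+|b|\le\sqrt{2}\,M$. This is a real gap, not bookkeeping. Take $h(z)=z+cz^2$, $g(z)=icz^2$ and $M=2\sqrt{2}\,c$: then $F_{\pm1}\in\mathcal{B}(M)$, yet $|zh''(z)|+|zg''(z)|=4c|z|>M$ for $|z|>1/\sqrt{2}$.

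So the full circle of $\varepsilon$ is genuinely needed. Your pointwise phase choice, which turns the triangle inequality into an equality at each $z$, is exactly what makes the converse work. You also check the normalization $h'(0)=1$, $g'(0)=0$ in the converse, which the paper's argument skips.

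What your approach does not give is an extension to several variables. The alignment trick handles a single pair $(zh'',zg'')$, whereas in Theorem \ref{Th-1.3} one $\varepsilon$ must serve all $n^3$ terms at once. For example, $h=z_1+z_2+c(z_1^2+z_2^2)$, $g=c(z_1^2-z_2^2)$ and $M=8\sqrt{2}\,c$ give $F_\varepsilon\in\mathcal{B}_2(M)$ for every $|\varepsilon|=1$, but $f\notin\mathcal{B}_{\mathcal{H}_2^0}(M)$. For Theorem C itself, your argument is complete.
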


It is known from \cite{Ponnusamy-Allu-Vuorinen-2009} that functions in $\mathcal{B}(M)$ are univalent whenever $M \leq 1$. Consequently, in view of Theorem C, if $f=h+\overline{g}$ belongs to $\mathcal{B}_{\mathcal{H}^0}(M)$, then the functions $F_{\varepsilon}=h+\varepsilon g$ belong to $\mathcal{B}(M)$ for all $|\varepsilon|=1$, and are thus univalent for $M \leq 1$. This implies that the function $F=h+g$ is stably holomorphic univalent. Furthermore, by Theorem B, functions in the class $\mathcal{B}_{\mathcal{H}^0}(M)$ are stably harmonic univalent for $M \leq 1$, which directly implies their univalence.\vspace{1.2mm}

The following result due to Ghosh and Allu \cite{Ghosh-Allu-2018} provides sharp coefficient bounds for functions in $\mathcal{B}_{\mathcal{H}^0}(M)$.

\begin{theoD}\cite[Theorem 2.2]{Ghosh-Allu-2018} Let $f=h+\ol g\in \mathcal{B}_{\mathcal{H}^0}(M)$ and 
\begin{align}\label{THD}
h(z)=z+\sum\limits_{k=2}^{\infty} a_{k}z^k\quad \text{and}\quad g(z)=\sum\limits_{k=2}^{\infty}b_k z^k.
\end{align}
Then for $k\geq 2$,
\begin{enumerate}
\item[\emph{(i)}] $|a_k|\leq \dfrac{M}{k(k-1)}$,\vspace{2mm}
\item[\emph{(ii)}] $|b_k|\leq \dfrac{M}{k(k-1)}$.
\end{enumerate}
Both inequalities are sharp.
\end{theoD}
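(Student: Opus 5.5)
We need to prove Theorem D: for $f=h+\ol g\in\mathcal{B}_{\mathcal{H}^0}(M)$ with $h,g$ as in (\ref{THD}), we have $|a_k|,|b_k|\le M/(k(k-1))$ for $k\ge 2$, and both bounds are sharp. Here $n=1$, so the defining condition reads $|z h''(z)|+|z g''(z)|\le M$ for all $z\in\mathbb{D}$.

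The plan is to bound $|zh''(z)|$ and $|zg''(z)|$ separately by $M$ and then extract coefficients with Cauchy's formula. For $|z|=r<1$ we have
\begin{align*}
z h''(z)=\sum_{k=2}^{\infty}k(k-1)a_k z^{k-1},
\end{align*}
so Cauchy's estimate on the circle $|z|=r$ gives
\begin{align*}
k(k-1)|a_k|r^{k-1}=\left|\frac{1}{2\pi i}\int_{|z|=r}\frac{zh''(z)}{z^{k}}\,dz\right|\le \max_{|z|=r}|zh''(z)|\le M .
\end{align*}
Letting $r\to1^-$ yields (i). The identical argument applied to $zg''(z)=\sum_{k\ge2}k(k-1)b_kz^{k-1}$ yields (ii).

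Alternatively, Theorem C reduces the statement to the analytic class. Each $F_\varepsilon=h+\varepsilon g$ lies in $\mathcal{B}(M)$, so the same Cauchy argument gives $|a_k+\varepsilon b_k|\le M/(k(k-1))$ for every $|\varepsilon|=1$. Choosing $\varepsilon$ so that $\varepsilon b_k$ has the same argument as $a_k$ gives the sharper bound $|a_k|+|b_k|\le M/(k(k-1))$, which implies both (i) and (ii).

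For sharpness, fix $k\ge2$.
\begin{itemize}
\item For (i), take $h(z)=z+\frac{M}{k(k-1)}z^k$ and $g\equiv0$. Then $|zh''|=M|z|^{k-1}\le M$, so $f\in\mathcal{B}_{\mathcal{H}^0}(M)$ and equality holds in (i).
\item For (ii), take $h(z)=z$ and $g(z)=\frac{M}{k(k-1)}z^k$. Then $g'(0)=0$ as required, $|zg''|\le M$, and equality holds in (ii).
\end{itemize}

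There is no serious obstacle. The only care points are that the definition's condition is pointwise on $\mathbb{D}$, so the maximum modulus on circles of radius $r$ is legitimately bounded by $M$, and that the limit $r\to1^-$ is taken after the Cauchy estimate.
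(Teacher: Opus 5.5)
Your proof is correct and essentially matches the paper. Theorem D itself is only quoted from Ghosh--Allu, but the paper's proof of its multidimensional analogue, Theorem~\ref{Th-1.5}, specializes at $n=1$ to the same Cauchy-formula estimate followed by $r\to1^-$, with the same extremal functions $z+\frac{M}{k(k-1)}z^k$ and $z+\overline{\frac{M}{k(k-1)}z^k}$. The paper detours through $|zh''|\le M-|zg''|$ and then discards the $g$-term, which your direct pointwise bound $|zh''|\le M$ makes unnecessary.

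One small slip: the contour integral in your display equals $k(k-1)a_k$, not $k(k-1)a_k r^{k-1}$. The line should read $k(k-1)|a_k|r^{k-1}=r^{k-1}\bigl|\frac{1}{2\pi i}\int_{|z|=r}zh''(z)z^{-k}\,dz\bigr|\le\max_{|z|=r}|zh''(z)|\le M$.

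Your alternative via Theorem C, which yields the stronger bound $|a_k|+|b_k|\le M/(k(k-1))$, is also valid.
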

In the same paper, Ghosh and Allu \cite{Ghosh-Allu-2018} obtained the following result that gives a sufficient condition for functions to belong to $\mathcal{B}_{\mathcal{H}^0}(M)$.

\begin{theoE}\cite[Theorem 2.4]{Ghosh-Allu-2018} Let $f=h+\ol g\in \mathcal{H}^0$ and be given by (\ref{THD}) and
\begin{align*}
\sum\limits_{k=2}^{\infty} k(k-1)(|a_k|+|b_k|)\leq M.
\end{align*}

Then $f\in \mathcal{B}_{\mathcal{H}^0}(M)$.
\end{theoE}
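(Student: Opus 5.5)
The statement just quoted is Theorem E, so the plan is to prove this one-variable sufficient condition directly. Let $f=h+\ol g\in\mathcal{H}^0$ with coefficients as in (\ref{THD}) and $\sum_{k\ge2}k(k-1)(|a_k|+|b_k|)\le M$. By the definition of $\mathcal{B}_{\mathcal{H}^0}(M)$ (the case $n=1$ of the class defined above), it suffices to show that
\[
|z h''(z)|+|z g''(z)|\le M \quad\text{for all } z\in\mathbb{D}.
\]

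The plan is a termwise estimate of the power series.

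\begin{enumerate}
\item Differentiate twice inside the disk of convergence. The hypothesis forces absolute convergence of $\sum k(k-1)|a_k|$ and $\sum k(k-1)|b_k|$, so this is legitimate on $\mathbb{D}$. We obtain
\[
z h''(z)=\sum_{k\ge2}k(k-1)a_k z^{k-1},\qquad z g''(z)=\sum_{k\ge2}k(k-1)b_k z^{k-1}.
\]
\item Apply the triangle inequality and $|z|^{k-1}<1$ for $z\in\mathbb{D}$:
\[
|zh''(z)|+|zg''(z)|\le\sum_{k\ge2}k(k-1)\bigl(|a_k|+|b_k|\bigr)|z|^{k-1}\le\sum_{k\ge2}k(k-1)\bigl(|a_k|+|b_k|\bigr)\le M.
\]
\item The normalization $f\in\mathcal{H}^0$ is given by assumption, so $f\in\mathcal{B}_{\mathcal{H}^0}(M)$.
\end{enumerate}

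An alternative route goes through Theorem C. For each $|\varepsilon|=1$, the function $F_\varepsilon=h+\varepsilon g$ has coefficients $a_k+\varepsilon b_k$, and the same estimate gives $|zF_\varepsilon''|\le M$, so $F_\varepsilon\in\mathcal{B}(M)$. Theorem C then yields the conclusion. However, the direct estimate above is shorter and self-contained.

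No step is a real obstacle. The only point to be careful about is the convergence justification in step 1, which follows from the summability hypothesis. For later use, the same termwise argument with multi-indices would give the $n$-variable analogue. There one bounds $\sum_l\sum_{j,k}|z_l\,\partial_j\partial_k h|$ by $n\sum_{|\alpha|\ge2}\sum_{j,k}|\partial_j\partial_k z^\alpha|\,|a_\alpha|$. In that setting the bookkeeping of the factor $n$ and of $\sum_{j,k}\alpha_j(\alpha_k-\delta_{jk})=|\alpha|(|\alpha|-1)$ is the main thing to get right.
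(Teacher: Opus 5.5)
Your argument is correct: termwise differentiation, the triangle inequality and $|z|^{k-1}<1$ give $|zh''(z)|+|zg''(z)|\le\sum_{k\ge2}k(k-1)(|a_k|+|b_k|)\le M$. This is the same coefficient-majorant argument the paper uses for its $n$-variable analogue, Theorem \ref{Th-1.6}; the paper gives no separate proof of Theorem E and only quotes it from Ghosh and Allu. Your multi-index bookkeeping also matches the paper's computations in (\ref{Th5:1.8a}) and (\ref{Th6:1.2}): the identity $\sum_{j,k}\alpha_j(\alpha_k-\delta_{jk})=|\alpha|(|\alpha|-1)$ and the extra factor $n$ from summing $|z_l|<1$ over $l$.
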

The novelty of this research lies in the significant extension of the theory of univalent harmonic functions from the complex plane ($\mathbb{C}$) to higher-dimensional complex spaces ($\mathbb{C}^n$). While previous studies by authors such as Clunie, Sheil-Small, and Ghosh and Allu primarily focused on planar harmonic mappings ($n=1$), this work generalizes several classical theorems to convex Reinhardt domains in $\mathbb{C}^n$. A key contribution is the development of a multidimensional version of the Noshiro-Warschawski Theorem tailored for pluriharmonic settings. Furthermore, by establishing a structural decomposition ($f=h+\overline{g}$) in simply connected domains of several variables, the paper provides a rigorous new pathway to bridge the gap between local derivative properties and stable pluriharmonic univalence in polydisks. The sharp coefficient bounds and univalence criteria provided for the class $\mathcal{B}_{\mathcal{H}_n^0}(M)$ represent a major advancement in the study of geometric function theory for several complex variables.

\section{{\bf Main Results}}\label{Sec-2}
The classical Noshiro-Warschawski Theorem serves as a cornerstone in geometric function theory, providing a simple yet powerful sufficient condition for the univalence of analytic functions based on the positivity of their derivative's real part. While this result is well-established for holomorphic functions in the unit disk, its extension to pluriharmonic mappings in higher dimensions requires a careful treatment of the partial derivatives across complex variables. In what follows, we generalize this principle to the setting of several complex variables within convex Reinhardt domains. By considering the alignment of the Jacobian matrix and the properties of the underlying geometry, we establish a robust univalence criterion. We now formally state this multidimensional analogue, which serves as the foundation for our subsequent results in the class $\mathcal{B}_{\mathcal{H}_{n}^{0}}(M)$.
\begin{theo}\label{Th-2.1} Suppose $f(z)=h(z)+\ol{g(z)}$ is a pluriharmonic mapping in a convex Reinhardt domain $\Omega\subset \mathbb{C}^n$ such that
\begin{align}\label{Th1:1.0} {\rm Re}\left(e^{i\gamma}\frac{\partial h(z)}{\partial z_j}\right)>\left|\frac{\partial g(z)}{\partial z_j}\right|\; \mbox{for all}\; j=1, 2, \ldots, n,
\end{align}
for all $z\in\Omega$ and for some $\gamma\in\mathbb{R}$. Then $f$ is univalent in $\Omega$.
\end{theo}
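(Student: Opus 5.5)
The plan is to follow the one-variable proof of Theorem A, with the segment argument of Proposition \ref{Pro:1}. First, working through that argument shows the statement as written fails for $n\ge 2$; the plan therefore ends with a restricted conclusion that the argument does support.

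Fix $z\neq w$ in $\Omega$ and set $\xi(t)=(1-t)z+tw$, which lies in $\Omega$ by convexity. Writing $h_j=\partial h/\partial z_j$, $g_j=\partial g/\partial z_j$ and $d_j=w_j-z_j$, the chain rule along the segment gives
\begin{align*}
f(w)-f(z)=\int_0^1\sum_{j=1}^n\left(h_j(\xi(t))\,d_j+\overline{g_j(\xi(t))\,d_j}\right)dt .
\end{align*}
Multiply by $e^{i\gamma}$ and test against a suitable unit complex number $\overline{\omega}$. If $d_j=\omega\rho_j$ with $\rho_j\ge 0$ for all $j$ and a common $|\omega|=1$, then
\begin{align*}
{\rm Re}\left(e^{i\gamma}\overline{\omega}\,(f(w)-f(z))\right)\ \ge\ \int_0^1\sum_{j=1}^n\rho_j\left({\rm Re}\left(e^{i\gamma}h_j(\xi)\right)-|g_j(\xi)|\right)dt .
\end{align*}
The integrand is positive by (\ref{Th1:1.0}), so $f(w)\neq f(z)$ whenever the increments $d_j$ share a common argument.

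The main obstacle is removing this common-argument restriction. In one variable it is automatic, since there is only one $d_j$. For $n\ge2$ the summands $h_j d_j$ can point in different directions and cancel. Proposition \ref{Pro:1} tries to fix this with the Reinhardt rotation $z_j\mapsto e^{-i(\alpha_j+\theta_j)}z_j$. However, that rotation changes the two points being compared: it proves $f(\tilde w)\neq f(\tilde z)$, not $f(w)\neq f(z)$, and $f$ is not rotation-invariant. I see no way to repair this, because the full statement is false for $n\ge 2$.

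Counterexample: take $\Omega=\mathbb{P}\Delta(0;1)$, $h(z)=z_1+z_2$, $g\equiv0$ and $\gamma=0$. Then ${\rm Re}\,h_j=1>0=|g_j|$ for each $j$, so (\ref{Th1:1.0}) holds. Yet $h(\tfrac12,-\tfrac12)=0=h(0,0)$, so $h$ is not univalent.

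So the plan is to prove the largest statement the argument supports, namely the displayed estimate. Concretely:
\begin{itemize}
\item $f$ is injective on each set $\Omega\cap\{z+\omega\rho:\rho\in[0,\infty)^n\}$ for fixed $z\in\Omega$ and unit $\omega$.
\item In particular, $f$ is injective on every complex line $z+\mathbb{C}v$ with $v\in[0,\infty)^n\setminus\{0\}$. Here every increment is $\lambda v$, so all $d_j=\lambda v_j$ share the argument of $\lambda$.
\item For $n=1$ the statement is exactly Theorem A.
\end{itemize}
Full univalence would need a hypothesis that couples the different $j$. One such hypothesis is ${\rm Re}\big(e^{i\gamma}\sum_j h_j d_j\big)>\big|\sum_j g_j d_j\big|$ for every direction $d\neq0$. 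Under it, the same integral estimate, now taken with $\omega=1$ for an arbitrary increment $d=w-z$, gives univalence immediately.
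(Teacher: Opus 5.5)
Your counterexample is correct, and your diagnosis of the segment-plus-rotation argument is exactly the flaw in the paper's own proof. The paper fixes $z\neq w$ and assumes ``for simplicity'' that $z_j\neq w_j$ for every $j$. It writes $w_j-z_j=r_je^{i\alpha_j}$ and replaces the pair by $\tilde z=(e^{-i\alpha_j}z_j)_j$, $\tilde w=(e^{-i\alpha_j}w_j)_j$, so that $\tilde w_j-\tilde z_j=r_j>0$. It then runs precisely your common-argument estimate, with $\omega=1$, along $[\tilde z,\tilde w]$. This yields $f(\tilde w)\neq f(\tilde z)$, and the paper concludes that $f$ is univalent. That conclusion does not follow, because $(\tilde z,\tilde w)$ is a different pair of points and $f$ is not invariant under the rotation.

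Your example $h=z_1+z_2$, $g\equiv0$, $\gamma=0$ on the unit bidisk satisfies (\ref{Th1:1.0}), while $h(\tfrac12,-\tfrac12)=h(0,0)$. So the theorem is false for $n\geq2$. The same example refutes Proposition \ref{Pro:1}, whose proof has the identical gap. The failure is in fact unavoidable. Suppose $U\subset\mathbb{C}^n$, $n\geq2$, is open and nonempty, and $f\colon U\to\mathbb{C}$ is continuous and injective. Then $z\mapsto(f(z),0,\ldots,0)$ is a continuous injection $U\to\mathbb{C}^n\cong\mathbb{R}^{2n}$ whose image has empty interior, contradicting invariance of domain. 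So no complex-valued function on a domain in $\mathbb{C}^n$, $n\geq2$, is univalent at all.

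Two points in your salvage need correcting.

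First, your bullet claiming injectivity on the whole cone $\Omega\cap\{z+\omega\rho:\rho\in[0,\infty)^n\}$ does not follow from your estimate, and it is false. Two points of the cone differ by $\omega(\rho'-\rho)$, and $\rho'-\rho$ may have entries of both signs. In your own example, $(\tfrac12,0)$ and $(0,\tfrac12)$ lie in the cone with vertex $0$ and $\omega=1$, yet $h$ equals $\tfrac12$ at both. What the estimate actually proves is that $f(w)\neq f(z)$ whenever $w-z\in\omega[0,\infty)^n\setminus\{0\}$ for some $|\omega|=1$. That is enough for your second bullet (injectivity on $\Omega\cap(z+\mathbb{C}v)$ for $v\in[0,\infty)^n\setminus\{0\}$) and for the case $n=1$.

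Second, the coupling hypothesis you propose, $\mathrm{Re}\bigl(e^{i\gamma}\sum_jh_jd_j\bigr)>\bigl|\sum_jg_jd_j\bigr|$ for all $d\neq0$, can never hold when $n\geq2$. At any point, choose $d\neq0$ in the kernel of the linear form $d\mapsto\sum_jh_j(z)d_j$; this makes the left side $0$. That is consistent with the topological obstruction above. The correct repair is a weaker conclusion (injectivity along such slices, or a theory of mappings $\Omega\to\mathbb{C}^n$), not a stronger hypothesis.
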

\begin{proof}[\bf Proof of Theorem \ref{Th-2.1}] Let $z=(z_1,z_2,\ldots,z_n)\in \Omega$ and $w=(w_1,w_2,\ldots,w_n) \in \Omega$ such that $z \neq w$. Since $z\neq w$, for the sake of simplicity we may assume that $z_j\neq w_j$ for all $j=1,\ldots,n$. Let $w_j-z_j=r_je^{i\alpha_j}$, $j=1,\ldots,n$.
Since $\Omega$ is a Reinhardt domain, it follows that $\tilde z=\left(e^{-i\alpha_1}z_1,e^{-i\alpha_2}z_2,\ldots,e^{-i\alpha_n}z_n\right)\in \Omega$ and $\tilde w=\left(e^{-i\alpha_1}w_1,e^{-i\alpha_2}w_2,\ldots,e^{-i\alpha_n}w_n\right)\in \Omega$. Note that $\tilde z\neq \tilde w$.
Then, because $\Omega$ is a convex domain, we see that
$\xi(t)=(1 - t) \tilde z + t \tilde w \in \Omega$ for $0 \le t \le 1$. Let $\varphi: [0,1]\to \mathbb{C}$ be defined by 
\begin{align*}
\varphi(t)=f(\xi(t))=h(\xi(t))+\ol{g(\xi(t))}.
\end{align*}
It is easy to see that $\varphi(1)=f(\tilde w)$ and $\varphi(0)=f(\tilde z)$. Moreover, we have 
\begin{align*}
h'(\xi(t)))=\frac{\partial h(\xi(t))}{\partial t}=\sum\limits_{j=1}^n\frac{\partial h(\xi(t))}{\partial z_j}.(w_j-z_j)e^{-i\alpha_j}
\end{align*}
and 
\begin{align*}
\frac{d\; \ol{g(\xi(t)))}}{d t}=\ol{\frac{\partial g(\xi(t))}{\partial t}}=\ol{\sum\limits_{j=1}^n\frac{\partial g(\xi(t))}{\partial z_j}(w_j-z_j)e^{-i\alpha_j}}=\sum\limits_{j=1}^n\ol{\frac{\partial g(\xi_j(t))}{\partial z_j}}.\ol{(w_j-z_j)e^{-i\alpha_j}}.
\end{align*}
Consequently, we have
\begin{align*}
\frac{d \varphi(t)}{dt}=\frac{d f(\xi(t))}{dt}=\sum\limits_{j=1}^n\frac{\partial h(\xi(t))}{\partial z_j}.(w_j-z_j)e^{-i\alpha_j}+\sum\limits_{j=1}^n\ol{\frac{\partial g(\xi(t))}{\partial z_j}}.\ol{(w_j-z_j)e^{-i\alpha_j}}.
\end{align*}
We observe that \begin{align*} \varphi(1)-\varphi(0)=\int_{0}^1 \frac{d \varphi(t)}{dt}dt, \end{align*} and so we have \begin{align}\label{Th1:1.1} f(\tilde w)-f(\tilde z)=\int\limits_{0}^1\left( \sum\limits_{j=1}^n r_j \frac{\partial h(\xi(t))}{\partial z_j}+\sum\limits_{j=1}^n r_j \overline{\frac{\partial g(\xi(t))}{\partial z_j}}\right)dt. \end{align}
Using (\ref{Th1:1.0}) to (\ref{Th1:1.1}), we obtain
\begin{align*}
|f(\tilde w)-f(\tilde z)|>&\int\limits_{0}^1 {\rm Re} \left( \sum\limits_{j=1}^nr_je^{i\gamma}\frac{\partial h(\xi(t))}{\partial z_j}+\sum\limits_{j=1}^nr_je^{i\gamma}\ol{\frac{\partial g(\xi(t))}{\partial z_j}}\right) dt\\=&
\int\limits_{0}^1  \left( \sum\limits_{j=1}^nr_j{\rm Re}\left(e^{i\gamma}\frac{\partial h(\xi(t))}{\partial z_j}\right)+\sum\limits_{j=1}^nr_j{\rm Re}\left(e^{i\gamma}\ol{\frac{\partial g(\xi(t))}{\partial z_j}}\right)\right) dt
\\>&
\int\limits_{0}^1  \left(\sum\limits_{j=1}^nr_j\left|\frac{\partial g(\xi(t))}{\partial z_j}\right|-\sum\limits_{j=1}^nr_j\left|\ol{\frac{\partial g(\xi(t))}{\partial z_j}}\right|\right) dt\\=&
\int\limits_{0}^1   \sum\limits_{j=1}^n r_j\left(\left|\frac{\partial g(\xi(t))}{\partial z_j}\right|-\left|\frac{\partial g(\xi(t))}{\partial z_j}\right|\right) dt=0\nonumber
\end{align*}
which proves the univalence of $f$ in $\Omega$.
\end{proof}
By applying the methodology used in the proof of Theorem 2.1, we immediately obtain the following corollary, which characterizes the univalence of the mapping's holomorphic and co-holomorphic parts separately.
\begin{cor}\label{Co-1.1} Suppose $f(z)$ is holomorphic in a convex Reinhardt domain $\Omega\subset \mathbb{C}^n$ such that
\begin{align*} {\rm Re}\left(\frac{\partial f(z)}{\partial z_1}\right)>0\; \mbox{for all}\; j=1, 2, \ldots, n,
\end{align*}
for all $z\in\Omega$. Then $f$ is univalent in $\Omega$.
\end{cor}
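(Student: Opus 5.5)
The plan is to obtain the corollary as the special case $h=f$, $g\equiv 0$, $\gamma=0$ of Theorem \ref{Th-2.1}. I read the hypothesis as ${\rm Re}\left(\frac{\partial f(z)}{\partial z_j}\right)>0$ for every $j=1,\ldots,n$; the subscript $z_1$ in the display appears to be a typo, since the quantifier ``for all $j$'' is otherwise idle. With $g\equiv 0$, condition (\ref{Th1:1.0}) becomes exactly this hypothesis, so the corollary would follow formally.

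If I instead argued directly, I would fix $z\neq w$ in $\Omega$ and use the segment $\xi(t)=(1-t)z+tw$, which lies in $\Omega$ by convexity. This gives
\begin{align*}
f(w)-f(z)=\int_0^1\sum_{j=1}^n \frac{\partial f(\xi(t))}{\partial z_j}\,(w_j-z_j)\,dt .
\end{align*}
The goal is to show that the real part of a suitable rotation of this quantity is positive.

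The main obstacle is precisely this positivity step. The increments $w_j-z_j$ have independent arguments $\alpha_j$. Positivity of each ${\rm Re}\,\partial f/\partial z_j$ therefore controls the sum only when all $\alpha_j$ coincide (modulo $\pi$, after a common rotation). The proof of Theorem \ref{Th-2.1} tries to remove this by passing to the rotated points $\tilde z,\tilde w$. That shows $f(\tilde z)\neq f(\tilde w)$, not $f(z)\neq f(w)$, and it also silently assumes $z_j\neq w_j$ for all $j$.

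I expect this step cannot be repaired, because the conclusion fails for $n\ge 2$. Take $f(z_1,z_2)=z_1+z_2$ on $\mathbb{P}\Delta(0;1)$. Then ${\rm Re}\,\partial f/\partial z_j=1>0$ for both $j$, yet $f(a,-a)=0=f(0,0)$. More generally, no holomorphic function from an open set of $\mathbb{C}^n$ into $\mathbb{C}$ with $n\ge 2$ is injective, since its fibres are analytic sets of dimension $n-1$.

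So the plan yields a valid proof only for $n=1$. In that case the argument above is the classical Noshiro--Warschawski proof: ${\rm Re}\big((w-z)^{-1}(f(w)-f(z))\big)=\int_0^1{\rm Re}\, f'(\xi(t))\,dt>0$.

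For $n\ge2$, the salvageable statement I would aim for is the one the argument actually supports. For each fixed $z\in\Omega$ and each direction $v\in\mathbb{C}^n$ whose coordinates satisfy $v_j=|v_j|e^{i\alpha}$ with a common argument $\alpha$, the restriction $t\mapsto f(z+tv)$ is univalent on its convex domain. This is proved by the same integral estimate, with the common factor $e^{i\alpha}$ pulled out of the sum.
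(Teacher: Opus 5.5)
Your conclusion is correct: the defect is in the paper's argument, not yours. The paper proves this corollary only by specializing Theorem~\ref{Th-2.1} to $h=f$, $g\equiv 0$, $\gamma=0$, which is your first paragraph. It therefore inherits exactly the flaw you located: the computation in that proof shows $f(\tilde z)\neq f(\tilde w)$ for the coordinatewise-rotated pair, and this says nothing about $f(z)$ versus $f(w)$.

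Your example shows this concretely. Take $z=(0,0)$ and $w=(a,-a)$ with $0<a<1$. Then $\alpha_1=0$ and $\alpha_2=\pi$, so $\tilde z=(0,0)$ and $\tilde w=(a,a)$. Indeed $f(\tilde w)=2a\neq 0=f(\tilde z)$, while $f(w)=f(z)$. Since $\mathbb{P}\Delta(0;1)$ is a convex Reinhardt domain, the corollary fails for every $n\ge 2$, and so does Theorem~\ref{Th-2.1} itself. This holds under either reading of the subscript.

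Your general obstruction can be made elementary under this hypothesis. Here $\partial f/\partial z_1\neq 0$ everywhere. By the holomorphic implicit function theorem, each level set $\{f=f(p)\}$ is, near $p$, a graph $z_1=\psi(z_2,\ldots,z_n)$ over an open subset of $\mathbb{C}^{n-1}$. So \emph{no} function satisfying the hypothesis is univalent when $n\ge 2$.

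Three small corrections.

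First, drop ``modulo $\pi$'' in your obstacle paragraph. Your own example has $\alpha_2=\alpha_1+\pi$, and there $\sum_j \frac{\partial f}{\partial z_j}(w_j-z_j)$ vanishes identically along the segment. The estimate needs a common argument modulo $2\pi$, which is what your salvage correctly uses.

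Second, the assumption $z_j\neq w_j$ is stated openly in the paper (``for the sake of simplicity''), not silently. It is also harmless: take $r_j=0$, $\alpha_j=0$ when $z_j=w_j$. The rotation is the only real defect.

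Third, it is worth noting why the rotation is harmless when $n=1$. The function $f(e^{i\alpha}\,\cdot\,)$ satisfies the hypothesis with $\gamma$ replaced by $\gamma-\alpha$. Running the rotated argument for that function therefore does separate $z$ and $w$. For $n\ge 2$ the analogous substitution produces a different angle $\gamma-\alpha_j$ in each coordinate, so there is no single rotation against which to take real parts.

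Your salvaged statement is correct. It asserts univalence of $t\mapsto f(z+tv)$ on the convex set $\{t\in\mathbb{C}: z+tv\in\Omega\}$ for $v\in e^{i\alpha}[0,\infty)^n\setminus\{0\}$. This follows from the one-variable Noshiro--Warschawski theorem, since $\mathrm{Re}\bigl(e^{-i\alpha}\tfrac{d}{dt}f(z+tv)\bigr)=\sum_j |v_j|\,\mathrm{Re}\,\frac{\partial f}{\partial z_j}>0$. This is precisely what the paper's computation actually proves.
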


When $\dim(\mathbb{C}^n)=1$ and $\Omega\subset \mathbb{C}$ is convex, Corollary \ref{Co-1.1} is known as the Noshiro-warschawski Theorem (see \cite[Theorem 2.16]{Duren-1983}).

\begin{rem} Since an open polydisk $\mathbb{P} \Delta(a;r)\subset \mathbb{C}^n$ is a convex Reinhardt domain, one can conclude that Theorem \ref{Th-2.1} and Corollary \ref{Co-1.1} both hold also in $\mathbb{P} \Delta(a;r)$.
\end{rem}
The relationship between the univalence of a harmonic mapping and its holomorphic counterpart is a central theme in geometric function theory. While this connection is well-understood in the plane, extending the concept of stable univalence to higher dimensions provides a deeper understanding of the structural properties of pluriharmonic mappings. The following theorem establishes this correspondence on the unit polydisk, showing that stable pluriharmonic univalence is equivalent to the stable univalence of the associated holomorphic function. We state the multidimensional version of Theorem B.
\begin{theo}\label{Th-2.2} A pluriharmonic mapping $f= h+\ol g$ on $\mathbb{P} \Delta(0;1)$ is stable pluriharmonic univalent if, and only if, the holomorphic function $F=h+g$ is stable holomorphic univalent on $\mathbb{P} \Delta(0;1)$.
\end{theo}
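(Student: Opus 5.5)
The plan is to transfer the planar argument of Hernandez and Martin (Theorem B) to the polydisk. The key point is that injectivity on $\mathbb{P}\Delta(0;1)$ can be tested pointwise on pairs $z\neq w$. For such a pair the needed identity is purely algebraic, so the dimension plays no role.

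The central observation concerns two holomorphic functions $h,g$ and two distinct points $z,w\in\mathbb{P}\Delta(0;1)$. For $|\lambda|=1$, the harmonic combination $h+\lambda\ol g$ fails to separate $z,w$ exactly when
\[
h(z)-h(w)=-\lambda\,\ol{\bigl(g(z)-g(w)\bigr)}.
\]
For $|\mu|=1$, the holomorphic combination $h+\mu g$ fails to separate them exactly when
\[
h(z)-h(w)=-\mu\bigl(g(z)-g(w)\bigr).
\]
Write $A=h(z)-h(w)$ and $B=g(z)-g(w)$. Both conditions say $A=cB'$ with $|c|=1$, where $B'$ is $\ol B$ or $B$. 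Either condition forces $|A|=|B|$. Conversely, if $|A|=|B|$, then there exist unimodular $\lambda$ and $\mu$ realizing each equality: when $B\neq 0$ take $\mu=-A/B$ and $\lambda=-A/\ol B$, and when $A=B=0$ every choice works.

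I would therefore prove the lemma that \emph{$f_\lambda=h+\lambda\ol g$ is univalent for every $|\lambda|=1$ if and only if $|h(z)-h(w)|\neq|g(z)-g(w)|$ for all $z\neq w$ in $\mathbb{P}\Delta(0;1)$, if and only if $F_\mu=h+\mu g$ is univalent for every $|\mu|=1$}. Theorem \ref{Th-2.2} then follows immediately once the definitions are matched. Stable pluriharmonic univalence of $f$ means all $f_\lambda$ are univalent. Stable holomorphic univalence of $F=h+g$ should be read as all $h+\mu g$ being univalent, $|\mu|=1$. This is the natural multidimensional analogue of the notion in \cite{Hernandez-Martin-2013}, and I would state it explicitly since the paper defines only the pluriharmonic version.

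The steps, in order, are as follows.
\begin{enumerate}
\item[(1)] State the definition of stable holomorphic univalence on $\mathbb{P}\Delta(0;1)$.
\item[(2)] Fix $z\neq w$ and prove the two equivalences above by the modulus argument, handling the degenerate case $B=0$ separately. In that case non-separation forces $A=0$, which is consistent with $|A|=|B|$.
\item[(3)] Quantify over all pairs to conclude.
\end{enumerate}
Unlike the planar statement, no sense-preserving hypothesis is needed, because only injectivity is compared.

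The only real obstacle is the definitional one in step (1). One must make sure that ``stable holomorphic univalent'' means univalence of the whole family $h+\mu g$ and not just of $F$ itself; otherwise the statement is false. A secondary detail is that the logic never uses the product structure of the polydisk. The argument would work on any domain, and I would remark on this.
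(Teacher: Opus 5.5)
Your proof is correct. Its core mechanism is the same as the paper's: for a fixed pair $z\neq w$, a coincidence of values of one combination is turned into a coincidence of values of the other by a suitable unimodular rotation.

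The paper, however, argues by contradiction in one direction only. It assumes every $f_\lambda$ is univalent and $F(z)=F(w)$, and first disposes of the case $h(z)=h(w)$, which is your degenerate case $A=B=0$. Otherwise it puts $\theta=\arg\bigl(h(z)-h(w)\bigr)$, observes that $e^{-i\theta}\bigl(g(w)-g(z)\bigr)$ is real, conjugates, and obtains $f_\mu(z)=f_\mu(w)$ with $\mu=e^{2i\theta}$. It concludes only that $F=h+g$ itself is univalent. It never defines stable holomorphic univalence and does not treat the converse implication at all.

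Your pairwise criterion $|h(z)-h(w)|\neq|g(z)-g(w)|$ is symmetric in the two families. It therefore delivers both implications and the entire family $h+\mu g$, $|\mu|=1$, at once. On the paper's side, the remaining members of the family could be recovered by applying its argument to $h+\overline{\mu g}$, which is stable pluriharmonic univalent whenever $f$ is. The converse, though, genuinely needs the step you supply: from $A=-\lambda\overline{B}$ deduce $|A|=|B|$, then take $\mu=-A/B$, or any $\mu$ if $B=0$.

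Your observations that no sense-preserving hypothesis and no property of the polydisk are used are also accurate. So is your point that the statement fails if stable holomorphic univalence is read as univalence of $F$ alone.
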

\begin{proof}[\bf Proof of Theorem \ref{Th-2.2}] Assume that $f_{\lambda}= h+\lambda \ol g$ is univalent for every $\lambda$ such that $\lambda|=1$ and suppose that $F= h+g$ is not univalent on $\mathbb{P} \Delta(0;1)$. Thus, there exist two different points $z, w\in \mathbb{P} \Delta(0;1)$ such that $F(z)=F(w)$.
\vspace{1.2mm}

First we suppose $h(z)=h(w)$. Then $F(z)=F(w)$ implies that $g(z)=g(w)$ and so $f_{\lambda}(z)=f_{\lambda}(w)$. This is a contradiction with our assumption.
\vspace{1.2mm}

Next we suppose $h(z)\neq h(w)$. Let $\theta=\arg \{h(z)-h(w)\}\in [0, 2\pi)$. Since $F(z)=F(w)$, we have $h(z)-h(w)=g(w)-g(z)$. Thus
\begin{align*}
e^{-i\theta}(h(z)-h(w))=e^{-i\theta}(g(w)-g(z))
\end{align*}
is a positive real number. Therefore, taking the conjugate on both sides of the last equality yields
\begin{align*}
	e^{-i\theta}(h(z)-h(w))=e^{i\theta}(\overline{g}(w)-\overline{g}(z)),
\end{align*}
which implies that $h(z)+\mu \overline{g}(z)=h(w)+\mu \overline{g}(w)$, \textit{i.e.,} $f_{\mu}(z)=f_{\mu}(w)$, where $\mu=e^{2i\theta}$. This again leads to a contradiction. Consequently, $F = h + g$ is univalent on the unit polydisk $\mathbb{P} \Delta(0;1)$, and the proof is complete.
\end{proof}
After establishing the general univalence criteria in Theorem 2.2, we now show that the specific class $\mathcal{B}_{\mathcal{H}_{n}^{0}}(M)$ possesses a one-to-one correspondence with its holomorphic counterpart on the unit polydisk. This result extends the planar findings of Ghosh and Allu to higher dimensions, providing a clear framework for analyzing pluriharmonic mappings through holomorphic functions. We state the multidimensional version of Theorem C.
\begin{theo}\label{Th-1.3} A pluriharmonic mapping $f=h+\ol g$ is in $\mathcal{B}_{\mathcal{H}_n^0}(M)$ if, and only if, $F_{\varepsilon}=h+\varepsilon g$ is in $\mathcal{B}_n(M)$ for each $\varepsilon\; (|\varepsilon|=1)$.
\end{theo}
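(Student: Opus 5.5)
The plan is to mirror the planar argument of Ghosh and Allu (Theorem C) term by term. Write $f=h+\ol g\in\mathcal{H}_n^0$, so that $h,g$ have the expansions (\ref{Eq 1.7}). The normalization has to be checked first. For every $|\varepsilon|=1$ we have $F_\varepsilon(0)=h(0)+\varepsilon g(0)=0$. Since $g$ has no linear terms, $\nabla F_\varepsilon(0)=\nabla h(0)=(1,\ldots,1)$. Hence $F_\varepsilon\in\mathcal{A}$ for every $\varepsilon$. (We read the definition of $\mathcal{B}_{\mathcal{H}_n^0}(M)$ with $h$ the holomorphic part, as in the rest of the paper.) The whole statement therefore reduces to a pointwise inequality. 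For each fixed $z\in\mathbb{P}\Delta(0;1)$, set
\begin{align*}
A_{ljk}=z_l\frac{\partial^2 h(z)}{\partial z_j\partial z_k},\qquad B_{ljk}=z_l\frac{\partial^2 g(z)}{\partial z_j\partial z_k}.
\end{align*}
We must show that
\begin{align*}
\sum_{l,j,k}\big(|A_{ljk}|+|B_{ljk}|\big)\le M
\iff
\sum_{l,j,k}|A_{ljk}+\varepsilon B_{ljk}|\le M\ \text{ for all } |\varepsilon|=1,
\end{align*}
where the right-hand side is required at every $z$ for each $\varepsilon$.

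The direction ($\Rightarrow$) is immediate from the triangle inequality:
\begin{align*}
\sum_{l,j,k}|A_{ljk}+\varepsilon B_{ljk}|\le\sum_{l,j,k}\big(|A_{ljk}|+|B_{ljk}|\big)\le M.
\end{align*}

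For ($\Leftarrow$), fix $z$ and choose $\varepsilon$ depending on $z$ to align the terms, as in the one-variable proof, where $\varepsilon=\ol{\arg}$-rotation gives $|h''|+|g''|=|h''+\varepsilon g''|$. The obstacle is that in $\mathbb{C}^n$ there are $n^3$ terms. A single unimodular $\varepsilon$ aligns $A_{ljk}$ with $\varepsilon B_{ljk}$ only if the phase differences $\arg A_{ljk}-\arg B_{ljk}$ are independent of $(l,j,k)$, and this fails in general.

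So I would first try an averaging argument. Integrate over $\varepsilon=e^{it}$, $t\in[0,2\pi]$. Pointwise,
\begin{align*}
\frac1{2\pi}\int_0^{2\pi}|a+e^{it}b|\,dt\ge\max(|a|,|b|),
\end{align*}
but this is weaker than $|a|+|b|$, so the hypothesis averaged only yields $\sum_{l,j,k}\max(|A_{ljk}|,|B_{ljk}|)\le M$. I do not think averaging suffices, and I suspect the equivalence genuinely fails for $n\ge2$ with this definition. A candidate counterexample would put the mixed second derivatives of $h$ and $g$ with opposite phase patterns in different slots $(j,k)$. Take $n=2$, $h=z_1+z_2+a(z_1^2+z_2^2)$ and $g=b(z_1^2-z_2^2)$. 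Then
\begin{align*}
\sum_{l,j,k}|A_{ljk}+\varepsilon B_{ljk}|=(|z_1|+|z_2|)\cdot 2\big(|a+\varepsilon b|+|a-\varepsilon b|\big),
\end{align*}
whose maximum over $\varepsilon$ is $2\sqrt2\sqrt{|a|^2+|b|^2}<2(|a|+|b|)$, up to the common factor $2(|z_1|+|z_2|)$.

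My concrete plan is therefore as follows. Prove ($\Rightarrow$) as above. For ($\Leftarrow$), attempt the rotation choice $\varepsilon=e^{i(\arg A_{l_0j_0k_0}-\arg B_{l_0j_0k_0})}$ at the dominant index. If this does not close, present the example above. That example shows that the converse requires either restricting to functions whose Hessian phases align (for instance, $h$ and $g$ depending on $z$ through a single linear form), or weakening the conclusion to a bound with constant $\sqrt{2}$-type loss. The main obstacle is precisely this step of aligning all $n^3$ phases simultaneously.
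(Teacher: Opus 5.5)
Your proof of ($\Rightarrow$), the normalization check followed by the triangle inequality, is the paper's argument; the paper leaves $F_\varepsilon\in\mathcal{A}$ implicit. For ($\Leftarrow$) your suspicion is justified: for $n\ge2$ the stated equivalence is false, and the paper's proof of the converse does not hold up.

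The paper uses only $\varepsilon=\pm1$. It then asserts termwise bounds $|A_{ljk}\pm B_{ljk}|\le M_{l,j,k}$ with one splitting $\sum_{l,j,k}M_{l,j,k}=M$ serving both signs, which does not follow from the two summed inequalities. In addition, the parallelogram law only yields $2(|A_{ljk}|^2+|B_{ljk}|^2)\le 2M_{l,j,k}^2$, so even granting the splitting one would get only $|A_{ljk}|+|B_{ljk}|\le\sqrt2\,M_{l,j,k}$. Restricting to $\varepsilon=\pm1$ fails already for $n=1$: $A=1$, $B=i$ gives $|A\pm B|=\sqrt2$ but $|A|+|B|=2$. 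Theorem C rests on a $z$-dependent choice of $\varepsilon$, which, as you note, cannot align all $n^3$ phases at once. So drop the ``rotation at the dominant index'' attempt, which cannot close, and present your example as a definitive disproof of ($\Leftarrow$).

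The example needs one correction. Since $|a+\varepsilon b|^2+|a-\varepsilon b|^2=2(|a|^2+|b|^2)$, you have $\max_{|\varepsilon|=1}\bigl(|a+\varepsilon b|+|a-\varepsilon b|\bigr)=2\sqrt{|a|^2+|b|^2}$, attained when $\operatorname{Re}(\ol a\varepsilon b)=0$. Your value $2\sqrt2\sqrt{|a|^2+|b|^2}$ carries a spurious $\sqrt2$. With it, your displayed inequality $2\sqrt2\sqrt{|a|^2+|b|^2}<2(|a|+|b|)$ is false, because $(|a|+|b|)^2\le 2(|a|^2+|b|^2)$. The correct comparison, after removing the common factor $2(|z_1|+|z_2|)$, is $2\sqrt{|a|^2+|b|^2}<2(|a|+|b|)$ for $ab\neq0$.

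To make it a complete disproof, fix $a=b=1$ and $M=8\sqrt2$. Every $F_\varepsilon$ lies in $\mathcal{A}$ and satisfies
\[
\sum_{l,j,k}\left|z_l\frac{\partial^2F_\varepsilon(z)}{\partial z_j\partial z_k}\right|=2(|z_1|+|z_2|)\bigl(|1+\varepsilon|+|1-\varepsilon|\bigr)\le 4\sqrt2\,(|z_1|+|z_2|)<8\sqrt2
\]
on $\mathbb{P}\Delta(0;1)$. On the other hand, $\sum_{l,j,k}(|A_{ljk}|+|B_{ljk}|)=8(|z_1|+|z_2|)=12.8>8\sqrt2$ at $z=(0.8,0.8)$. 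Hence every $F_\varepsilon\in\mathcal{B}_2(8\sqrt2)$, while $f\notin\mathcal{B}_{\mathcal{H}_2^0}(8\sqrt2)$.

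As for a salvage, averaging is stronger than you state. For a suitable $t_0$ one has $|a+e^{it}b|\ge(|a|+|b|)\,|\cos\frac{t-t_0}{2}|$, so $\frac1{2\pi}\int_0^{2\pi}|a+e^{it}b|\,dt\ge\frac2\pi(|a|+|b|)$. Averaging the hypothesis over $\varepsilon=e^{it}$ then gives the converse with $M$ replaced by $\frac{\pi}{2}M$. A $\sqrt2$ loss is not enough in general. For $n=4$, take $h=\sum_j(z_j+z_j^2)$ and $g=\sum_j i^jz_j^2$. Then $\sup_z\sum(|A_{ljk}|+|B_{ljk}|)$ exceeds $\sup_{z,\varepsilon}\sum|A_{ljk}+\varepsilon B_{ljk}|$ by the factor $2/\sqrt{1+1/\sqrt2}\approx1.53>\sqrt2$.
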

\begin{proof} Let $f\in \mathcal{B}_{\mathcal{H}_n^0}(M)$. Then for each $\varepsilon\;(|\varepsilon|=1)$, we have
\begin{align*}
\sum\limits_{l=1}^n\sum\limits_{j=1}^n\sum\limits_{k=1}^n\left|z_l\frac{\partial^2 F_{\varepsilon}(z)}{\partial z_j\partial z_k}\right|
=&\sum\limits_{l=1}^n\sum\limits_{j=1}^n\sum\limits_{k=1}^n\left|z_l\frac{\partial^2 h(z)}{\partial z_j\partial z_k}+\varepsilon z_l\frac{\partial^2 g(z)}{\partial z_j\partial z_k}\right|\\ \leq&
\sum\limits_{l=1}^n\sum\limits_{j=1}^n\sum\limits_{k=1}^n\left|z_l\frac{\partial^2 h(z)}{\partial z_j\partial z_k}\right|+\sum\limits_{l=1}^n\sum\limits_{j=1}^n\sum\limits_{k=1}^n\left|z_l\frac{\partial^2 g(z)}{\partial z_j\partial z_k}\right|\\\leq& M
\end{align*}
for all $z\in \mathbb{P} \Delta(0;1)$ and so $F_{\varepsilon}\in \mathcal{B}_n(M)$. Conversely, let $F_{\varepsilon}=h+\varepsilon g\in \mathcal{B}_n(M)$ for each $\varepsilon\;(|\varepsilon|=1)$. Then we have
\begin{align*}
\sum\limits_{l=1}^n\sum\limits_{j=1}^n\sum\limits_{k=1}^n\left|z_l\frac{\partial^2 F_{\varepsilon}(z)}{\partial z_j\partial z_k}\right|
=&\sum\limits_{l=1}^n\sum\limits_{j=1}^n\sum\limits_{k=1}^n\left|z_l\frac{\partial^2 h(z)}{\partial z_j\partial z_k}+\varepsilon z_l\frac{\partial^2 g(z)}{\partial z_j\partial z_k}\right|\leq M
\end{align*}
for all $z\in \mathbb{P} \Delta(0;1)$. If we take $\varepsilon=\pm 1$, then from above inequality we have respectively
\begin{align}\label{Th3:1.1}
\left|z_l\frac{\partial^2 h(z)}{\partial z_j\partial z_k}\pm z_l\frac{\partial^2 g(z)}{\partial z_j\partial z_k}\right|^2\leq M_{l,j,k}^2
\end{align}
for all $z\in \mathbb{P} \Delta(0;1)$ and for all $j,k,l\in\{1,2,\ldots,n\}$, where 
\begin{align*}
\sum\limits_{l=1}^n\sum\limits_{j=1}^n\sum\limits_{k=1}^n M_{l,j,k}=M.
\end{align*}

But we know that $|z+w|^2+|z-w|^2=2\left(|z|^2+|w|^2\right)$ for all $z,w\in\mathbb{C}$. From (\ref{Th3:1.1}), we have 
\begin{align}\label{Th3:1.2}
2\left(\left|z_l\frac{\partial^2 h(z)}{\partial z_j\partial z_k}\right|^2+\left|z_l\frac{\partial^2 g(z)}{\partial z_j\partial z_k}\right|^2\right)\leq M_{l,j,k}^2
\end{align}
for all $z\in \mathbb{P} \Delta(0;1)$ and for all $j,k,l\in\{1,2,\ldots,n\}$. On the other hand, for all $z, w \in \mathbb{C}^n$, we have
\begin{align*}
\left(|z|+|w|\right)^2=2\left(|z|^2+|w|^2\right)-\left(|z|-|w|\right)^2\leq 2\left(|z|^2+|w|^2\right).
\end{align*}
Thus it follows from (\ref{Th3:1.2}) that
\begin{align*}
\left|z_l\frac{\partial^2 h(z)}{\partial z_j\partial z_k}\right|+\left|z_l\frac{\partial^2 g(z)}{\partial z_j\partial z_k}\right|\leq M_{l,j,k}
\end{align*}
for all $z\in \mathbb{P} \Delta(0;1)$ and for all $j,k,l\in\{1,2,\ldots,n\}$. A simple computation leads to
\begin{align*}
\sum\limits_{l=1}^n\sum\limits_{j=1}^n\sum\limits_{k=1}^n\left|z_l\frac{\partial^2 h(z)}{\partial z_j\partial z_k}\right|+\sum\limits_{l=1}^n\sum\limits_{j=1}^n\sum\limits_{k=1}^n\left|z_l\frac{\partial^2 g(z)}{\partial z_j\partial z_k}\right|\leq \sum\limits_{l=1}^n\sum\limits_{j=1}^n\sum\limits_{k=1}^n M_{l,j,k}=M
\end{align*}
and so $f\in \mathcal{B}_{\mathcal{H}_n^0}(M)$.
\end{proof}

The following lemma is contained in \cite[Lemma 6.1.28]{Graham-Kohr}.
\begin{lem}\label{Lem1} Let $f$ be holomorphic in the polydisk $\mathbb{P}\Delta(0;1_n)$ such that $|f(z)|\leq 1$ for all $z\in \mathbb{P}\Delta(0;1_n)$. Suppose $k(\geq 1)$ is the zero-multiplicity of $f$ at $0$. Then
\begin{align*}
|f(z)|\leq ||z||_{\infty}^k\;\text{ for all}\; z\in \mathbb{P}\Delta(0;1_n).
\end{align*}
\end{lem}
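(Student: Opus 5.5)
This is the Schwarz lemma on the polydisk. The plan is to reduce to the one-variable Schwarz lemma by restricting $f$ to complex lines through the origin.

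Fix $z\in\mathbb{P}\Delta(0;1_n)$ with $z\neq 0$ (for $z=0$ the claim is trivial, since $k\geq 1$ forces $f(0)=0$), and put $r=||z||_\infty\in(0,1)$. Consider the one-variable function
\begin{align*}
\psi(\zeta)=f\left(\zeta\,\frac{z}{r}\right),\qquad \zeta\in\mathbb{D}.
\end{align*}
It is well defined, because every coordinate of $\zeta z/r$ has modulus at most $|\zeta|<1$, so $\zeta z/r\in\mathbb{P}\Delta(0;1_n)$. It is holomorphic in $\zeta$, being a composition of a holomorphic function with a linear map, and it satisfies $|\psi|\leq 1$ on $\mathbb{D}$.

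Next I would expand $f$ in homogeneous polynomials at the origin, $f(w)=\sum_{i\geq k}P_i(w)$, which is valid on the whole polydisk. Substituting $w=\zeta z/r$ gives
\begin{align*}
\psi(\zeta)=\sum_{i\geq k}P_i\!\left(\frac{z}{r}\right)\zeta^i,
\end{align*}
so $\psi$ has a zero of order at least $k$ at $\zeta=0$.

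The one-variable step is the standard Schwarz lemma argument applied to $\psi(\zeta)/\zeta^k$. This quotient is holomorphic on $\mathbb{D}$ after removing the singularity at $0$. On $|\zeta|=\rho<1$ it is bounded by $\rho^{-k}$, so by the maximum principle it is bounded by $\rho^{-k}$ on $|\zeta|\leq\rho$. Letting $\rho\to 1$ gives $|\psi(\zeta)|\leq|\zeta|^k$ for all $\zeta\in\mathbb{D}$.

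Finally, evaluating at $\zeta=r$ gives $|f(z)|=|\psi(r)|\leq r^k=||z||_\infty^k$, which is the claim.

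The only point needing care is justifying the substitution of the homogeneous expansion along the slice, that is, showing that $\psi$ vanishes to order at least $k$. One way is to note that $\frac{d^m\psi}{d\zeta^m}(0)=m!\,P_m(z/r)=0$ for $m<k$, using the chain rule and the fact that $P_m\equiv 0$ for $m<k$. The other is to use absolute convergence of the power series of $f$ on the polydisk, which lets the terms be regrouped by total degree. With that settled, the rest is routine.
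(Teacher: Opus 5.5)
Your proof is correct. The paper gives no proof of its own; it only cites Graham and Kohr (Lemma 6.1.28), whose standard proof is exactly your argument: restrict to the slice $\zeta\mapsto f(\zeta z/||z||_{\infty})$ and apply the one-variable Schwarz lemma to a function vanishing to order at least $k$ at $0$. Your check that $\psi^{(m)}(0)=m!\,P_m(z/r)=0$ for $m<k$ is the one detail that needs verifying, and it holds.
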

Extending the univalence properties known for the planar class $\mathcal{B}(M)$, we show that functions in the multidimensional class $\mathcal{B}_{\mathcal{H}_{n}^{0}}(M)$ are univalent when $M$ is suitably restricted.
\begin{theo}\label{Th-1.4} Suppose $f\in \mathcal{B}_n(M)$ and $M\leq 1$. Then $f$ is univalent.
\end{theo}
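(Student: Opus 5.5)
The natural route is the Noshiro--Warschawski criterion, Corollary \ref{Co-1.1} (the case $g\equiv 0$, $\gamma=0$ of Theorem \ref{Th-2.1}), applied on the convex Reinhardt domain $\mathbb{P}\Delta(0;1)$. It therefore suffices to show that
\[
{\rm Re}\,\frac{\partial f(z)}{\partial z_j}>0 \quad \text{for all } z\in\mathbb{P}\Delta(0;1) \text{ and all } j=1,\ldots,n .
\]
Since $f\in\mathcal{A}$, we have $\frac{\partial f(0)}{\partial z_j}=1$. The plan is to show
\[
\left|\frac{\partial f(z)}{\partial z_j}-1\right|<1,
\]
and to get this from the bound on second derivatives.

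\textbf{Step 1: the radial integral.} Fix $z\neq 0$ in the polydisk and set $\psi_j(t)=\frac{\partial f(tz)}{\partial z_j}$ for $t\in[0,1]$. Then
\[
\frac{\partial f(z)}{\partial z_j}-1=\int_0^1\sum_{k=1}^n z_k\,\frac{\partial^2 f(tz)}{\partial z_j\partial z_k}\,dt .
\]

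\textbf{Step 2: using the hypothesis.} The hypothesis is stated with weights $|z_l|$, not $|z_k|$, so it must be converted. Choose $l$ with $|z_l|=\|z\|_\infty$. Then for each $k$,
\[
|t z_k|\,\left|\frac{\partial^2 f(tz)}{\partial z_j\partial z_k}\right|\le \left|t z_l\,\frac{\partial^2 f(tz)}{\partial z_j\partial z_k}\right| .
\]
The $l$-th coordinate of $tz$ is exactly $tz_l$, so applying the defining inequality of $\mathcal{B}_n(M)$ at the point $tz$ and dropping all other (nonnegative) terms gives
\[
\sum_k \left|t z_l\,\frac{\partial^2 f(tz)}{\partial z_j\partial z_k}\right|\le M .
\]
Hence the integrand in Step 1 is bounded by $M/t$. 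This is too weak near $t=0$, so the $t$ is recovered as follows.

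\textbf{Step 3 (main obstacle): Schwarz-type refinement.} The holomorphic function
\[
\Phi(w)=\frac{1}{M}\sum_k w_l\,\frac{\partial^2 f(w)}{\partial z_j\partial z_k}\,\overline{\epsilon_k}
\]
(unimodular constants $\epsilon_k$ chosen to align phases at the given point) satisfies $|\Phi|\le 1$ and $\Phi(0)=0$. By Lemma \ref{Lem1} with $k=1$, $|\Phi(w)|\le\|w\|_\infty$. Evaluating at $w=tz$ then gives an integrand bounded by $M\|tz\|_\infty/t=M\|z\|_\infty$ up to the same $|z_k|\le|z_l|$ comparison. Integrating,
\[
\left|\frac{\partial f(z)}{\partial z_j}-1\right|\le M\|z\|_\infty<1 \quad \text{since } M\le 1 \text{ and } \|z\|_\infty<1 .
\]

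Hence ${\rm Re}\,\frac{\partial f}{\partial z_j}>0$ for every $j$, and Corollary \ref{Co-1.1} yields univalence. The delicate point is Step 3: I must ensure the weighted combination with a fixed index $l$ (chosen from the given $z$, and then kept fixed along the ray) is a single holomorphic function bounded by $1$ on the whole polydisk. This holds because for each fixed $l$ and each $w$,
\[
\sum_k\left|w_l\,\frac{\partial^2 f(w)}{\partial z_j\partial z_k}\right|\le M .
\]
Should the Schwarz step be problematic, the fallback is the cruder bound of Step 2 combined with the observation that the factor $z_l$ itself vanishes at $0$, i.e.\ dividing by $w_l$ along the ray.
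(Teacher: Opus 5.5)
Your Steps 1--3 are sound, with one adjustment. The phases of $\partial^2 f(tz)/\partial z_j\partial z_k$ vary with $t$, so the constants $\epsilon_k$ must be chosen anew for each $t$. That is, you apply Lemma \ref{Lem1} to a family $\Phi_t$, each satisfying $|\Phi_t|\le 1$ and $\Phi_t(0)=0$, and evaluate $\Phi_t$ at $tz$. With this, fixing $l$ with $|z_l|=\|z\|_\infty$ legitimately gives $|\partial f(z)/\partial z_j-1|\le M\|z\|_\infty$. The paper instead applies Lemma \ref{Lem1} to each $\omega_{jk}$ separately, which only yields $nM\|z\|_\infty$; its last display silently drops this factor $n$. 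For $n=1$ your argument, closed by the classical Noshiro--Warschawski theorem, is a complete proof. Your ending is also the right one: the paper concludes only that the partial derivatives do not vanish and then cites Proposition \ref{Pro:1}, which gives at most local univalence.

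The genuine gap is the final step when $n\ge 2$. There Corollary \ref{Co-1.1} is false, and so is the statement itself, so no argument can close it. Take $f(z)=z_1+\cdots+z_n$. It belongs to $\mathcal{A}$ and all its second partials vanish, so $f\in\mathcal{B}_n(M)$ for every $M>0$, and $\partial f/\partial z_j\equiv 1$. Yet $f(t,-t,0,\ldots,0)=0=f(0)$ for every $|t|<1$. This is structural, not an accident of the example: the nonempty fibres of a nonconstant holomorphic function of $n\ge 2$ variables are complex hypersurfaces. More generally, by invariance of domain, no continuous map from an open subset of $\mathbb{R}^{2n}$ into $\mathbb{R}^2$ is injective when $n\ge 2$. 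The proof of Theorem \ref{Th-2.1} breaks where it replaces $z,w$ by the rotated points $\tilde z,\tilde w$. It only shows $f(\tilde w)\neq f(\tilde z)$, i.e.\ injectivity on pairs whose difference has positive real coordinates; for $n=2$, $z=0$, $w=(t,-t)$ with $t>0$, it actually compares $f(0)$ with $f(t,t)$. The paper's own proof of this theorem fails at the same point, since Proposition \ref{Pro:1} rests on the same rotation device and is equally false for $n\ge 2$. So for $n\ge 2$ your Steps 1--3 establish ${\rm Re}\,\partial f/\partial z_j>0$ on $\mathbb{P}\Delta(0;1)$, not univalence.
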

\begin{proof} Let $f\in \mathcal{B}_n(M)$. Then
\begin{align}\label{Th4:1}
\sum\limits_{l=1}^n\sum\limits_{j=1}^n\sum\limits_{k=1}^n\left|z_l\frac{\partial^2 f(z)}{\partial z_j\partial z_k}\right|\leq M
\end{align}
for all $z=(z_1,z_2,\ldots,z_n)\in \mathbb{P} \Delta(0;1)$. Set $z=(z_1,z_2,\ldots,z_n)\in \mathbb{P} \Delta(0;1)$ such that $z\neq 0$. Clearly, $||z||_{\infty}<1$. Because $\mathbb{P} \Delta(0;1)$ is a convex domain, we have
\begin{align*}
	\xi(t)=\frac{z}{||z||_{\infty}}t \in \mathbb{P} \Delta(0;1)\; \mbox{for}\;0 \le t \le ||z||_{\infty}.
\end{align*} Let $\varphi_j: [0,||z||_{\infty}]\to \mathbb{C}$ be defined by 
\begin{align*}
\varphi_k(t)=\frac{\partial f(\xi(t))}{\partial z_k}
\end{align*}
for $k=1,2,\ldots,n$. Then, we see that $\varphi_k(||z||_{\infty})=\frac{\partial f(z)}{\partial z_k}$ and $\varphi_k(0)=\frac{\partial f(0)}{\partial z_k}$ for $k=1,2,\ldots,n$. Note that
\begin{align*}
\varphi_k'(t)=\frac{\partial \varphi_k(t)}{\partial t}=\sum\limits_{j=1}^n\frac{\partial^2 f(\xi(t))}{\partial z_j \partial z_k}.\frac{z_j}{||z||_{\infty}}
\end{align*}
for $j=1,2,\ldots,n$. We see that
\begin{align*}
\varphi_j(||z||_{\infty})-\varphi_j(0)=\int\limits_{0}^{||z||_{\infty}} \frac{d \varphi_j(t)}{dt}dt.
\end{align*}
Since $\frac{\partial f(0)}{\partial z_k}=1$ for $k=1,2,\ldots,n$, we have
\begin{align}\label{Th4:1.1}
\frac{\partial f(z)}{\partial z_k}-1=\int\limits_{0}^{||z||_{\infty}}\left( \sum\limits_{j=1}^n\frac{\partial^2 f(\xi(t))}{\partial z_j \partial z_k}.\frac{z_j}{||z||_{\infty}}\right) dt.
\end{align}

Let 
\begin{align}\label{Th410}
\omega_{jk}(z)=z_j\frac{\partial^2 f(z)}{\partial z_j\partial z_k}/M
\end{align}
where $z=(z_1,z_2,\ldots,z_n)\in \mathbb{P} \Delta(0;1)$ and $j,k\in\{1,2,\ldots,n\}$. Clearly $\omega_{jk}(z)$ is a holomorphic function in $\mathbb{P} \Delta(0;1)$ such that $\omega_l(0)=0$ for $j.k\in\{1,2,\ldots,n\}$. Also from (\ref{Th4:1}), we deduce that $|\omega_{jk}(z)|\leq 1$ in $\mathbb{P} \Delta(0;1)$ for $j,k\in \{1,2,\ldots,n\}$. By Lemma \ref{Lem1}, we obtain
\begin{align}\label{Th411}
|\omega_{jk}(z)|\leq ||z||_{\infty}
\end{align}
for $j,k\in\{1,2,\ldots,n\}$. Consequently from (\ref{Th410}) and (\ref{Th411}), we have
\begin{align}\label{Th413}
\sum\limits_{j=1}^n\left|z_j\frac{\partial^2 f(z)}{\partial z_j \partial z_k}\right|\leq nM||z||_{\infty}.
\end{align}
In view of (\ref{Th413}) and (\ref{Th4:1.1}), we obtain
\begin{align*}
\left|\left|\frac{\partial f(z)}{\partial z_k}\right|-1\right|\leq \left|\frac{\partial f(z)}{\partial z_k}-1\right|\leq\int\limits_{0}^{||z||_{\infty}}\left( \sum\limits_{j=1}^n\left|\frac{\partial^2 f(\xi(t))}{\partial z_j \partial z_k}.\frac{z_j}{||z||_{\infty}}\right|\right) dt\leq M||z||_{\infty}<M
\end{align*}
which shows that
\begin{align*}
1-M<\left|\frac{\partial f(z)}{\partial z_j}\right|<1+M
\end{align*}
for $j=1,2,\ldots,n$. Consequently, $\frac{\partial f(z)}{\partial z_k} \neq 0$ for all $z \in \mathbb{P} \Delta(0;1) \setminus \{0\}$ and each $k \in \{1, \dots, n\}$. Given that the normalization at the origin implies $\frac{\partial f(0)}{\partial z_k} = 1$, it follows that the partial derivatives are non-vanishing throughout the entire polydisc $\mathbb{P} \Delta(0;1)$. Thus, by Proposition 1.1, we conclude that $f$ is univalent in $\mathbb{P} \Delta(0;1)$.
\end{proof}

Theorem \ref{Th-2.1} implies that functions in $\mathcal{B}_n(M)$ are univalent provided $M \leq 1$. Consequently, in view of Theorem \ref{Th-2.2}, for any $f = h + \overline{g}$ in $\mathcal{B}_{\mathcal{H}_n^0}(M)$, the functions $F_{\varepsilon} = h + \varepsilon g$ ($|\varepsilon| = 1$) belong to $\mathcal{B}_n(M)$ and are univalent for $M \leq 1$. This ensures that $F = h + g$ is stably holomorphic univalent. By Theorem \ref{Th-2.2}, functions in the class $\mathcal{B}_{\mathcal{H}_n^0}(M)$ are stably pluriharmonic univalent when $M \leq 1$, and hence, they are univalent.
\vspace{1.2mm}

In the following theorem, we establish sharp coefficient estimates for functions in the class $\mathcal{B}_{\mathcal{H}_n^0}(M)$, thereby providing a multidimensional generalization of the planar results established by Ghosh and Allu in Theorem D.
\begin{theo}\label{Th-1.5} Let $f=h+\ol g\in \mathcal{B}_{\mathcal{H}_n^0}(M)$ and and be given by (\ref{Eq 1.7}). Then for any multi-index $\alpha=(\alpha_1,\alpha_2,\ldots,\alpha_n)$ such that $|\alpha|=m\geq 2$, we have
\begin{enumerate}
\item[\emph{(i)}] $\sum\limits_{|\alpha|=m}|a_{\alpha}|\leq \dfrac{\binom{m+n-1}{n-1}M}{nm(m-1)}$,\vspace{1.2mm}
\item[\emph{(ii)}] $\sum\limits_{|\alpha|=m}|b_{\alpha}|\leq \dfrac{\binom{m+n-1}{n-1}M}{nm(m-1)}$.
\end{enumerate}
Both inequalities are sharp.
\end{theo}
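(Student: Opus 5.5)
The plan is to reduce everything to one-variable Cauchy estimates on the distinguished boundary (torus) of a smaller polydisk. I treat $h$; the argument for $g$ is identical, since the defining inequality of $\mathcal{B}_{\mathcal{H}_n^0}(M)$ is symmetric in $h$ and $g$.

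\textbf{Step 1 (drop the factor $z_l$).} Fix $0<r<1$ and take $z$ on the torus $|z_1|=\cdots=|z_n|=r$. The defining inequality gives
\begin{align*}
\sum_{l=1}^n\sum_{j,k=1}^n |z_l|\left|\frac{\partial^2 h(z)}{\partial z_j\partial z_k}\right|\leq M .
\end{align*}
Since every $|z_l|=r$, this says $nr\sum_{j,k}\left|\partial^2 h(z)/\partial z_j\partial z_k\right|\leq M$ on that torus.

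\textbf{Step 2 (read off coefficients).} By (\ref{Eq 1.7}), $\partial^2 h/\partial z_j\partial z_k=\sum_{|\alpha|\geq 2}a_\alpha\,\alpha_j(\alpha_k-\delta_{jk})\,z^{\alpha-e_j-e_k}$. Fix $\alpha$ with $|\alpha|=m$. The coefficient of $z^{\alpha-e_j-e_k}$ is the Fourier coefficient of this derivative over the torus. The Cauchy integral formula on $\overline{\mathbb{P}\Delta}(0;r)$ then gives
\begin{align*}
|a_\alpha|\,\alpha_j(\alpha_k-\delta_{jk})\,r^{m-2}\leq \frac{1}{(2\pi)^n}\int_{[0,2\pi]^n}\left|\frac{\partial^2 h(re^{i\theta})}{\partial z_j\partial z_k}\right|d\theta .
\end{align*}
Now sum over $j,k$ and use the identity $\sum_{j,k}\alpha_j(\alpha_k-\delta_{jk})=m^2-m=m(m-1)$. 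Combining with Step 1, this yields $m(m-1)|a_\alpha|r^{m-2}\leq M/(nr)$. Letting $r\to1^-$ gives the pointwise bound $|a_\alpha|\leq M/(nm(m-1))$ for each multi-index $\alpha$ with $|\alpha|=m$.

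\textbf{Step 3 (sum over multi-indices).} There are exactly $\binom{m+n-1}{n-1}$ multi-indices of degree $m$. Summing the bound of Step 2 over them gives (i), and the same argument gives (ii).

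\textbf{Step 4 (sharpness), the main obstacle.} The natural extremal candidate is
\begin{align*}
h(z)=\sum_{j=1}^n z_j+\frac{M}{nm(m-1)}z_1^m,\qquad g\equiv 0 .
\end{align*}
Here $\sum_{l,j,k}|z_l\,\partial_j\partial_k h|=\frac{M}{n}|z_1|^{m-2}\sum_l|z_l|<M$, so $h\in\mathcal{B}_{\mathcal{H}_n^0}(M)$. This function attains the coefficientwise bound of Step 2, and similarly with $h$ and $g$ swapped. However, it attains the summed bound (i) only when $n=1$.

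To attain the full sum in (i) one would need all coefficients of degree $m$ to be extremal simultaneously. The symmetric candidate $c\sum_{|\alpha|=m}z^\alpha$ only permits $c=M/(nm(m-1)\binom{m+n-1}{n-1})$. I would therefore first try to verify whether equality in (i) is genuinely attainable for $n\geq 2$. If it is not, I would state the sharpness as referring to the coefficientwise bound from Step 2, realised by the example above, which reduces to Theorem D when $n=1$.
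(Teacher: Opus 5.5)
Your proof of the inequalities (i) and (ii) is correct and follows the paper's route. Both use Cauchy estimates on the torus $|z_1|=\cdots=|z_n|=r$, the identity $\sum_{j,k}\alpha_j(\alpha_k-\delta_{jk})=m(m-1)$, a factor $n$ coming from the $z_l$, and then the crude count of $\binom{m+n-1}{n-1}$ multi-indices. The paper applies Cauchy's formula to $z_l\,\partial^2 h/\partial z_j\partial z_k$ and sums over $l$; factoring out $\sum_l|z_l|=nr$ as you do is the same step, done more transparently. What you leave open is the sharpness clause, and your suspicion is justified. For $n\ge 2$ the bounds (i) and (ii) are not sharp, so that part of the statement cannot be proved, and the paper's argument for it is wrong.

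The cleanest witness is $m=2$. Since $\partial^2 h(0)/\partial z_j\partial z_k$ is the average of $\partial^2 h/\partial z_j\partial z_k$ over the torus of radius $r$, your Step 1 gives $\sum_{j,k}|\partial^2 h(0)/\partial z_j\partial z_k|\le M/(nr)$ for every $r<1$. Summing over ordered pairs $(j,k)$, the left side equals $2\sum_{|\alpha|=2}|a_\alpha|$. Hence
\begin{align*}
\sum_{|\alpha|=2}|a_\alpha|\le \frac{M}{2n},
\end{align*}
with equality for $h=\sum_j z_j+\frac{M}{2n}z_1^2$, $g\equiv 0$. By contrast, (i) gives only the weaker bound $\frac{(n+1)M}{4}$, which is larger by the factor $\binom{n+1}{2}$.

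For general $m$, equality in (i) would force $|a_{me_1}|=|a_{me_2}|=M/(nm(m-1))$. Parseval on the torus then gives $\sum_{j,k}(2\pi)^{-n}\int|\partial^2 h/\partial z_j\partial z_k|^2\,d\theta\ge 2r^{2m-4}M^2/n^2$. This contradicts the bound $M^2/(n^2r^2)$ obtained by squaring Step 1 once $r$ is close to $1$, so equality is never attained when $n\ge 2$.

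The paper's extremal functions do not rescue the claim. With every $a_\alpha=M/(n^2m(m-1))$, the coefficient sum is $\binom{m+n-1}{n-1}M/(n^2m(m-1))$, a factor $n$ below the stated bound. Moreover, for $n\ge2$ that function is not even in $\mathcal{B}_{\mathcal{H}_n^0}(M)$: along $z=(t,\ldots,t)$ the defining sum tends, as $t\to1^-$, to $\binom{m+n-1}{n-1}M/n\ge (n+1)M/2>M$.

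So your fallback is the correct statement. The sharp result is the coefficientwise bound $|a_\alpha|,|b_\alpha|\le M/(nm(m-1))$. It is attained for every $\alpha$ with $|\alpha|=m$, not only $me_1$, by $h=\sum_j z_j+\frac{M}{nm(m-1)}z^\alpha$, $g\equiv 0$, or with the extra term moved into $g$. For $n\ge 2$, (i) and (ii) are valid but non-sharp consequences of it.
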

\begin{proof} Let $f=h+\ol g\in \mathcal{B}_{\mathcal{H}_n^0}(M)$. Then
\begin{align}\label{Th5:1.1}
\sum\limits_{l=1}^n\sum\limits_{j=1}^n\sum\limits_{k=1}^n\left|z_l\frac{\partial^2 h(z)}{\partial z_j\partial z_k}\right|\leq M-\sum\limits_{l=1}^n\sum\limits_{j=1}^n\sum\limits_{k=1}^n\left|z_l\frac{\partial^2 g(z)}{\partial z_j\partial z_k}\right|
\end{align}
for all $z=(z_1,z_2,\ldots,z_n)\in \mathbb{P} \Delta(0;1)$. It follows from (\ref{Eq 1.7}) that
\begin{align}\label{Th5:1.2}
h(z)=\sum\limits_{k=1}^n z_j+\sum\limits_{m=2}^{\infty}P_m(z)\quad {and} \quad g(z)=\sum\limits_{k=2}^{\infty} Q_m(z)
\end{align}
for all $z\in \mathbb{P} \Delta(0;1)$, where
\begin{align}\label{Th5:1.3}
P_{m}(z)=\sum\limits_{|\alpha|=m} a_{\alpha} z^{\alpha}\quad \text{and}\quad Q_m(z)=\sum\limits_{|\alpha|=m} b_{\alpha} z^{\alpha}
\end{align}
are homogeneous polynomials of degree $m\geq 2$ in $z\in \mathbb{P} \Delta(0;1)$. We know that the maximum number of terms in $\sum\limits_{|\alpha|=m}$ is $\binom{|\alpha|+n-1}{n-1}$. For the sake of simplicity, we may assume that $j\leq k\leq l$. A simple computation shows that
\begin{align}\label{Th5:1.4}
z_l\frac{\partial^2 P_m(z)}{\partial z_j\partial z_k}=\sum\limits_{|\alpha|=m}\alpha_{jk} a_{\alpha}z_1^{\alpha_1}\ldots z_{j-1}^{\alpha_{j-1}}z_{j}^{\alpha_j^*}z_{j+1}^{\alpha_{j+1}}\ldots z_{k-1}^{\alpha_{k-1}}z_{k}^{\alpha_k^{**}}z_{k+1}^{\alpha_{k+1}}\ldots z_l^{\alpha_l+1}\ldots z_n^{\alpha_n}
\end{align}
is a homogeneous polynomial of degree $m-1$ in $z_1,z_2,\ldots,z_n$, where
\begin{align*}
\alpha_{jk}=
\begin{cases}
\alpha_j(\alpha_j-1), & \text{if}\; j=k\;\text{and}\;\alpha_j=|\alpha|,\\[2ex]
\alpha_j(\alpha_j-1), & \text{if}\; j=k\;\text{and}\;2\leq \alpha_j<|\alpha|,\\[2ex]
\alpha_j\alpha_k,& \text{if}\; j\neq k
\end{cases}
\;\;,\quad 
\alpha^*_j=
\begin{cases}
\alpha_j-2,& \text{if}\; j=k,\\[2ex]
\alpha_j-1,& \text{if}\; j\neq k
\end{cases}
\end{align*}
and
\begin{align*}
\alpha^{**}_k=
\begin{cases}
\alpha_k,& \text{if}\; j=k,\\[2ex]
\alpha_k-1,& \text{if}\; j\neq k.
\end{cases}
\end{align*}
Combining (\ref{Th5:1.2}) through (\ref{Th5:1.4}), we obtain
\begin{align}\label{Th5:1.5}
z_l\frac{\partial^2 h(z)}{\partial z_j\partial z_k}&=
\sum\limits_{m=2}^{\infty} z_l\frac{\partial^2 P_m(z)}{\partial z_j\partial z_k}\\=&\sum\limits_{m=2}^{\infty}\sum\limits_{|\alpha|=m}\alpha_{jk} a_{\alpha}z_1^{\alpha_1}\ldots z_{j-1}^{\alpha_{j-1}}z_{j}^{\alpha_j^*}z_{j+1}^{\alpha_{j+1}}\ldots z_{k-1}^{\alpha_{k-1}}z_{k}^{\alpha_k^{**}}z_{k+1}^{\alpha_{k+1}}\ldots z_l^{\alpha_l+1}\ldots z_n^{\alpha_n}.\nonumber
\end{align}
By applying Cauchy's integral formula to $z_l \frac{\partial^2 h(z)}{\partial z_j \partial z_k}$, it follows from \eqref{Th5:1.5} that
\begin{align}\label{Th5:1.6}
&(2\pi i)^n \alpha_{jk}a_{\alpha}\\=&\int\limits_{|z_1|=r_1}\ldots \int\limits_{|z_n|=r_n}\frac{z_l \frac{\partial^2 h(z)}{\partial z_j\partial z_k}\;dz_1\;d z_2\ldots d z_n}{z_1^{\alpha_1+1}\ldots z_{j-1}^{\alpha_{j-1}+1}z_{j}^{\alpha_j}z_{j+1}^{\alpha_{j+1}+1}\ldots z_{k-1}^{\alpha_{k-1}+1}z_{k}^{\alpha_k}z_{k+1}^{\alpha_{k+1}+1}\ldots z_l^{\alpha_l+2}\ldots z_n^{\alpha_n}+1},\nonumber
\end{align}
where $0<r_j<1$ for $j=1,2,\ldots,n$. By setting $z_j = r_j e^{i\theta_j}$ for $0 \leq \theta_j \leq 2\pi$ and $j = 1, \dots, n$, we can apply the boundary condition established in (2.6). Consequently, we obtain
\begin{align}\label{Th5:1.7}
&(2\pi)^n\sum\limits_{|\alpha|=m}\alpha_{jk}|a_{\alpha}|\\\leq &\binom{|\alpha|+n-1}{n-1}\int\limits_{0}^{2\pi}\ldots \int\limits_0^{2\pi}\frac{\left|r_le^{i\theta_l} \frac{\partial^2 h(z)}{\partial z_j\partial z_k}\right|\;d\theta_1\;d \theta_2\ldots d \theta_n}{r_1^{\alpha_1}\ldots r_{j-1}^{\alpha_{j-1}}r_{j}^{\alpha_j-1}r_{j+1}^{\alpha_{j+1}}\ldots r_{k-1}^{\alpha_{k-1}}r_{k}^{\alpha_k-1}r_{k+1}^{\alpha_{k+1}}\ldots r_l^{\alpha_l+1}\ldots r_n^{\alpha_n}}.\nonumber
\end{align}
Setting $r_j = r$ for each $j = 1, 2, \dots, n$ in inequality (2.7) yields the simplified form
\begin{align}\label{Th5:1.8}
(2\pi)^nr^m \sum\limits_{l=1}^n\sum\limits_{|\alpha|=m}\sum\limits_{j=1}^n\sum\limits_{k=1}^n \alpha_{jk}|a_{\alpha}|&=
(2\pi)^nr^m \sum\limits_{|\alpha|=m}\sum\limits_{l=1}^n\sum\limits_{j=1}^n\sum\limits_{k=1}^n \alpha_{jk}|a_{\alpha}|\\\leq &\binom{|\alpha|+n-1}{n-1}\int\limits_{0}^{2\pi}\ldots \int\limits_0^{2\pi}\sum\limits_{l=1}^n\sum\limits_{j=1}^n\sum\limits_{k=1}^n\left|r_le^{i\theta_l}\frac{\partial^2 h(z)}{\partial z_j\partial z_k}\right|\;d\theta_1\;d \theta_2\ldots d \theta_n.\nonumber
\end{align}
However, we observe that if we set $\alpha_j = |\alpha| = m$ for all $j \in \{1, 2, \dots, n\}$, then
\begin{align*}
\sum\limits_{|\alpha|=m}\sum\limits_{j=1}^n\sum\limits_{k=1}^n\alpha_{jk}|a_{\alpha}|=\sum\limits_{|\alpha|=m}\sum\limits_{j=1}^n \alpha_j(\alpha_j-1)|a_{\alpha}|=m(m-1)\sum\limits_{\substack{j=1\\\alpha_j=m}}^n|a_{\alpha}|
\end{align*}
and if $\alpha_j<|\alpha|$ for $j=1,2,\ldots,n$, then we have
\begin{align*}
\sum\limits_{|\alpha|=m}\sum\limits_{j=1}^n\sum\limits_{k=1}^n\alpha_{jk}|a_{\alpha}|=&
\sum\limits_{|\alpha|=m}\left(\sum\limits_{j=1}^n \alpha_j(\alpha_j-1)+2\sum\limits_{\substack{j,k=1\\j\neq k}}^n \alpha_j\alpha_k\right)|a_{\alpha}|\\=&m(m-1)\sum\limits_{\substack{|(\alpha_1,\alpha_2,\ldots,\alpha_n)|=m\\\alpha_j<m}}|a_{\alpha}|.
\end{align*}
Thus, we see that
\begin{align}\label{Th5:1.8a}
\sum\limits_{|\alpha|=m}\sum\limits_{j=1}^n\sum\limits_{k=1}^n\alpha_{jk}|a_{\alpha}|=m(m-1)\sum\limits_{|\alpha|=m}|a_{\alpha}|.
\end{align}
For any multi-index $\nu=(\nu_1,\nu_2,\ldots,\nu_n)$, we have
\begin{align}\label{Th5:1.9}
\int\limits_{0}^{2\pi}\ldots \int\limits_0^{2\pi} (e^{i\theta_1})^{\nu_1}\ldots (e^{i\theta_n})^{\nu_n}d\theta_1 \ldots d\theta_n=
\begin{cases}
0,& \nu\neq (0,0,\ldots,0),\\[2ex]
(2\pi)^n,& \nu=(0,0,\ldots,0).
\end{cases}
\end{align}
Moreover, we see that
\begin{align*}
&\sum\limits_{l=1}^n\sum\limits_{j=1}^n\sum\limits_{k=1}^nz_l\frac{\partial^2 g(z)}{\partial z_j\partial z_k}\\=&
\sum\limits_{m=2}^{\infty}\sum\limits_{|\alpha|=m}\sum\limits_{l=1}^n\sum\limits_{j=1}^n\sum\limits_{k=1}^n\alpha_{jk} b_{\alpha}z_1^{\alpha_1}\ldots z_{j-1}^{\alpha_{j-1}}z_{j}^{\alpha_j^*}z_{j+1}^{\alpha_{j+1}}\ldots z_{k-1}^{\alpha_{k-1}}z_{k}^{\alpha_k^{**}}z_{k+1}^{\alpha_{k+1}}\ldots z_l^{\alpha_l+1}\ldots z_n^{\alpha_n}.\nonumber
\end{align*}
Using (\ref{Th5:1.9}), we deduce that
\begin{align*}
\frac{1}{(2\pi)^n}\int\limits_{0}^{2\pi}\ldots \int\limits_0^{2\pi}\sum\limits_{l=1}^n\sum\limits_{j=1}^n\sum\limits_{k=1}^n r_le^{i\theta_l}\frac{\partial^2 g(z)}{\partial z_j\partial z_k}\;d\theta_1\;d \theta_2\ldots d \theta_n=0.
\end{align*}
In view of (\ref{Th5:1.1}), we obtain from (\ref{Th5:1.8}) that
\begin{align*}
&r^m nm(m-1)\sum\limits_{|\alpha|=m}|a_{\alpha}|\\\leq &\binom{|\alpha|+n-1}{n-1}\frac{1}{(2\pi)^n}\int\limits_{0}^{2\pi}\ldots \int\limits_0^{2\pi}\left(M-\sum\limits_{l=1}^n\sum\limits_{j=1}^n\sum\limits_{k=1}^n\left|r_le^{i\theta_l}\frac{\partial^2 g(z)}{\partial z_j\partial z_k}\right|\right)\;d\theta_1\;d \theta_2\ldots d \theta_n\nonumber\\\leq&
\binom{|\alpha|+n-1}{n-1}M-\left|\frac{1}{(2\pi)^n}\int\limits_{0}^{2\pi}\ldots \int\limits_0^{2\pi}\sum\limits_{l=1}^n\sum\limits_{j=1}^n\sum\limits_{k=1}^n r_le^{i\theta_l}\frac{\partial^2 g(z)}{\partial z_j\partial z_k}\;d\theta_1\;d \theta_2\ldots d \theta_n\right|\\=&
\binom{|\alpha|+n-1}{n-1}M.
\end{align*}

Letting $r\to 1^-$, we obtain the coefficient bound $(i)$.\vspace{1.2mm}

\noindent Similarly, a parallel reasoning establishes that
\begin{align*}
\sum\limits_{|\alpha|=m}|b_{\alpha}|\leq \frac{\binom{m+n-1}{n-1}M}{nm (m-1)}
\end{align*}
if $m\geq 2$. \vspace{1.2mm}

To show that the bounds in (i) and (ii) are sharp, we consider the functions
\begin{align*}
	f_1(z)=\sum\limits_{j=1}^n z_j + \sum\limits_{|\alpha|=m} a_{\alpha} z^{\alpha} \quad \text{and} \quad f_2(z)=\sum\limits_{j=1}^n z_j + \sum\limits_{|\alpha|=m} \overline{b_{\alpha} z^{\alpha}},
\end{align*}
where
\begin{align*}
	a_{\alpha}=\frac{M}{n^2m(m-1)} \quad \text{and} \quad b_{\alpha}=\frac{M}{n^2m(m-1)}
\end{align*}
for every multi-index $\alpha=(\alpha_1,\alpha_2,\ldots,\alpha_n)$ satisfying $|\alpha|=m$.\vspace{1.2mm}

It is easy to verify that $f_1, f_2 \in \mathcal{B}_{\mathcal{H}_n^0}(M)$. Furthermore, a direct calculation shows that
\begin{align*}
	\sum_{|\alpha|=m} |a_{\alpha}(f_1)| = \frac{\binom{m+n-1}{n-1}M}{n^2 m(m-1)} \quad \text{and} \quad \sum_{|\alpha|=m} |b_{\alpha}(f_2)| = \frac{\binom{m+n-1}{n-1}M}{n^2 m(m-1)},
\end{align*}
which establishes the sharpness of the bounds and completes the proof.
\end{proof}
The following result provides a sufficient condition for a pluriharmonic mapping to belong to the class $\mathcal{B}_{\mathcal{H}_n^0}(M)$, established through an inequality involving its Taylor coefficients.
\begin{theo}\label{Th-1.6} Let $f=h+\ol g\in\mathcal{H}^0_n$ and be given by (\ref{Eq 1.7}) and
\begin{align}\label{Th6:1.1}
\sum\limits_{m=2}^{\infty}\sum\limits_{|\alpha|=m}n m(m-1)(|a_{\alpha}|+|a_{\alpha}|)\leq M.
\end{align}
Then $f\in \mathcal{B}_{\mathcal{H}_n^0}(M)$.
\end{theo}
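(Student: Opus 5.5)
We will verify the defining inequality of $\mathcal{B}_{\mathcal{H}_n^0}(M)$ directly from the power series (\ref{Eq 1.7}), in the spirit of Theorem E. We read the hypothesis (\ref{Th6:1.1}) as
\begin{align*}
\sum_{m=2}^{\infty}\sum_{|\alpha|=m} n\,m(m-1)\bigl(|a_\alpha|+|b_\alpha|\bigr)\leq M,
\end{align*}
since the second $|a_\alpha|$ is evidently a misprint for $|b_\alpha|$. For the pure harmonic part nothing is needed: $f\in\mathcal{H}_n^0$ already fixes the normalization $f(0)=0$, $\nabla h(0)=(1,\ldots,1)$, $\ol\partial f(0)=0$. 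The linear part $\sum_j z_j$ of $h$ has vanishing second derivatives, so only the terms with $|\alpha|\geq 2$ enter.

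\textbf{Termwise bound.} Fix $z\in\mathbb{P}\Delta(0;1)$ and indices $l,j,k$. The second derivatives of $h$ and $g$ may be taken termwise, since the series converge absolutely and locally uniformly. For a monomial $z^\alpha$ with $|\alpha|=m$ we have
\begin{align*}
\frac{\partial^2 z^\alpha}{\partial z_j\partial z_k}=c_{jk}(\alpha)\,z^{\alpha-e_j-e_k},
\end{align*}
where $c_{jj}(\alpha)=\alpha_j(\alpha_j-1)$ and $c_{jk}(\alpha)=\alpha_j\alpha_k$ for $j\neq k$. The monomial is understood to vanish whenever $c_{jk}(\alpha)=0$. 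Since $|z_i|<1$ for all $i$, every monomial satisfies $|z_l z^{\alpha-e_j-e_k}|<1$. The triangle inequality then gives
\begin{align*}
\left|z_l\frac{\partial^2 h(z)}{\partial z_j\partial z_k}\right|+\left|z_l\frac{\partial^2 g(z)}{\partial z_j\partial z_k}\right|\leq \sum_{m\geq2}\sum_{|\alpha|=m} c_{jk}(\alpha)\bigl(|a_\alpha|+|b_\alpha|\bigr).
\end{align*}

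\textbf{Combinatorial identity and summation.} The key identity is
\begin{align*}
\sum_{j,k=1}^n c_{jk}(\alpha)=\sum_j\alpha_j^2-\sum_j\alpha_j+\sum_{j\neq k}\alpha_j\alpha_k=\Bigl(\sum_j\alpha_j\Bigr)^2-m=m(m-1).
\end{align*}
This is the same count that appears in (\ref{Th5:1.8a}), now justified cleanly. Summing the termwise bound over $j,k$ and interchanging the (nonnegative) sums produces the factor $m(m-1)$. Summing over $l=1,\ldots,n$ contributes a further factor $n$, because the right-hand side does not depend on $l$. Therefore
\begin{align*}
\sum_{l,j,k}\left(\left|z_l\frac{\partial^2 h}{\partial z_j\partial z_k}\right|+\left|z_l\frac{\partial^2 g}{\partial z_j\partial z_k}\right|\right)\leq \sum_{m\geq 2}\sum_{|\alpha|=m} n\,m(m-1)\bigl(|a_\alpha|+|b_\alpha|\bigr)\leq M,
\end{align*}
which says exactly that $f\in\mathcal{B}_{\mathcal{H}_n^0}(M)$.

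\textbf{Expected difficulty.} The only real obstacle is bookkeeping, not analysis. One must get the identity $\sum_{j,k}c_{jk}(\alpha)=m(m-1)$ right, in particular counting the off-diagonal pairs $(j,k)$ and $(k,j)$ both and not doubling them again. One must also justify the interchange of the infinite sums, which is immediate from nonnegativity and absolute convergence.
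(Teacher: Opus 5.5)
Your proof is correct and is essentially the paper's argument: expand $z_l\,\partial^2 h/\partial z_j\partial z_k$ and $z_l\,\partial^2 g/\partial z_j\partial z_k$ termwise, bound each monomial by $1$ on $\mathbb{P}\Delta(0;1)$, collapse the sum over $j,k$ with the identity $\sum_{j,k}c_{jk}(\alpha)=m(m-1)$ (the paper's (\ref{Th5:1.8a})), pick up a factor $n$ from the sum over $l$, and conclude from the hypothesis read with $|b_\alpha|$ in place of the second $|a_\alpha|$, as the paper implicitly does. Your derivation of that identity via $\bigl(\sum_j\alpha_j\bigr)^2-m$, counting ordered off-diagonal pairs once, is cleaner than the paper's case-split, and adding the $h$- and $g$-bounds directly avoids the paper's detour through $M$ minus the $g$-sum.
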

\begin{proof} Let $f=h+\ol g\in \mathcal{H}^0_n$. Since $z=(z_1,\ldots,z_n)\in \mathbb{P} \Delta(0;1)$, using (\ref{Th5:1.5}) and (\ref{Th5:1.8a}), we obtain
\begin{align}\label{Th6:1.2}
\sum\limits_{l=1}^n\sum\limits_{j=1}^n\sum\limits_{k=1}^n\left|z_l\frac{\partial^2 h(z)}{\partial z_j\partial z_k}\right|\leq& \sum\limits_{m=2}^{\infty}\sum\limits_{l=1}^n\sum\limits_{|\alpha|=m}\sum\limits_{j=1}^n\sum\limits_{k=1}^n\alpha_{jk}|a_{\alpha}|\\=&
\sum\limits_{m=2}^{\infty}\sum\limits_{l=1}^n \sum\limits_{|\alpha|=m}m(m-1)|a_{\alpha}|\nonumber\\=&
\sum\limits_{m=2}^{\infty}\sum\limits_{|\alpha|=m}nm(m-1)|a_{\alpha}|\nonumber.
\end{align}
Similarly, it can be establish that
\begin{align}\label{Th6:1.3}
\sum\limits_{l=1}^n\sum\limits_{j=1}^n\sum\limits_{k=1}^n\left|z_l\frac{\partial^2 g(z)}{\partial z_j\partial z_k}\right|\leq
\sum\limits_{m=2}^{\infty}\sum\limits_{|\alpha|=m}nm(m-1)|b_{\alpha}|.
\end{align}
From \eqref{Th6:1.1} and \eqref{Th6:1.3}, we obtain
\begin{align*}
	\sum\limits_{l=1}^n\sum\limits_{j=1}^n\sum\limits_{k=1}^n\left|z_l\frac{\partial^2 h(z)}{\partial z_j\partial z_k}\right| &\leq M - \sum\limits_{m=2}^{\infty}\sum\limits_{|\alpha|=m} nm(m-1)|b_{\alpha}| \
	&\leq M - \sum\limits_{l=1}^n\sum\limits_{j=1}^n\sum\limits_{k=1}^n\left|z_l\frac{\partial^2 g(z)}{\partial z_j\partial z_k}\right|.
\end{align*}
Rearranging these inequalities, we find that
\begin{align*}
	\sum\limits_{l=1}^n\sum\limits_{j=1}^n\sum\limits_{k=1}^n \left( \left|z_l\frac{\partial^2 h(z)}{\partial z_j\partial z_k}\right| + \left|z_l\frac{\partial^2 g(z)}{\partial z_j\partial z_k}\right| \right) \leq M,
\end{align*}
which confirms that $f \in \mathcal{B}_{\mathcal{H}_n^0}(M)$.
\end{proof}
In the next result, we establish that the class $\mathcal{B}_{\mathcal{H}_n^0}(M)$ is closed under convex combinations.
\begin{theo} The class $\mathcal{B}_{\mathcal{H}_n^0}(M)$ is closed under convex combination.
\end{theo}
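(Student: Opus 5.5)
Take finitely many $f_i=h_i+\ol{g_i}\in\mathcal{B}_{\mathcal{H}_n^0}(M)$, $i=1,\ldots,N$, and weights $t_i\ge 0$ with $\sum_{i=1}^N t_i=1$. Set
\begin{align*}
F=\sum_{i=1}^N t_if_i=H+\ol{G},\qquad H=\sum_{i=1}^N t_ih_i,\quad G=\sum_{i=1}^N t_ig_i.
\end{align*}
The plan is to check directly the two defining requirements for $F\in\mathcal{B}_{\mathcal{H}_n^0}(M)$: first membership in $\mathcal{H}_n^0$, and then the pointwise inequality on the second-order derivatives.

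\emph{Step 1 (normalization).} $F$ is pluriharmonic because each $f_i$ is and pluriharmonicity is preserved by real linear combinations; equivalently, $H$ and $G$ are holomorphic on $\mathbb{P}\Delta(0;1)$. Since $\sum_i t_i=1$,
\begin{align*}
H(0)=0,\qquad \frac{\partial H(0)}{\partial z_j}=\sum_{i=1}^N t_i=1 \quad (j=1,\ldots,n).
\end{align*}
Also $G(0)=0$ and $\ol{\partial}F(0)=\sum_i t_i\,\ol{\partial}f_i(0)=0$, so $F\in\mathcal{H}_n^0$. In coefficient form, the coefficients of $F$ are $A_\alpha=\sum_i t_ia_\alpha^{(i)}$ and $B_\alpha=\sum_i t_ib_\alpha^{(i)}$, which agrees with (\ref{Eq 1.7}).

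\emph{Step 2 (the defining inequality).} Fix $z\in\mathbb{P}\Delta(0;1)$ and indices $l,j,k$. By linearity of differentiation and the triangle inequality,
\begin{align*}
\left|z_l\frac{\partial^2 H(z)}{\partial z_j\partial z_k}\right|+\left|z_l\frac{\partial^2 G(z)}{\partial z_j\partial z_k}\right|\le \sum_{i=1}^N t_i\left(\left|z_l\frac{\partial^2 h_i(z)}{\partial z_j\partial z_k}\right|+\left|z_l\frac{\partial^2 g_i(z)}{\partial z_j\partial z_k}\right|\right).
\end{align*}
Summing over $l,j,k$ and swapping the finite sums, the right-hand side becomes $\sum_i t_i S_i(z)$. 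Here $S_i(z)\le M$ is exactly the defining inequality for $f_i$. Hence the total is at most $M\sum_i t_i=M$, so $F\in\mathcal{B}_{\mathcal{H}_n^0}(M)$.

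\emph{Possible extension and main obstacle.} The finite case is entirely routine, so the only point needing care is an extension to countable convex combinations $\sum_{i=1}^\infty t_if_i$. There I would first show the series converges locally uniformly on $\mathbb{P}\Delta(0;1)$. To get this, I would use the coefficient bounds of Theorem \ref{Th-1.5}, which give a uniform majorant: $\sum_{|\alpha|=m}|a_\alpha^{(i)}|$ is bounded by a constant times $\binom{m+n-1}{n-1}/m^2$, and this is summable against $r^m$ for $r<1$. Locally uniform convergence of holomorphic functions then allows termwise differentiation (Weierstrass). The inequality of Step 2 carries over by taking the limit of partial sums, using Step 2's estimate for each finite truncation together with $\sum_i t_i=1$.
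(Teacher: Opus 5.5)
Your proof is correct and follows the same route as the paper: set $h=\sum_s t_sh_s$ and $g=\sum_s t_sg_s$, check the normalization, and apply the triangle inequality termwise so that the defining sum for the combination is at most $\sum_s t_s M=M$. Keeping the $h$- and $g$-terms together, as you do, is slightly cleaner than the paper's version, which moves the $g$-terms to the right-hand side and writes as an equality a step that only holds as ``$\le$'' (harmlessly, since that is the needed direction); your countable extension is an optional extra, since the statement concerns finite combinations.
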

\begin{proof} Let $f_s=h_s+\ol g_s\in \mathcal{B}_{\mathcal{H}_n^0}(M)$ for $s=1,2,\ldots,m$ and $\sum_{s=1}^m t_s=1\;(0\leq t_s\leq 1)$.
The convex combination of the $f_s$'s can be written as
\begin{align*}
f(z)=\sum\limits_{s=1}^m t_sf_s=h(z)+\ol {g(z)},
\end{align*}
where $h(z)=\sum_{s=1}^m t_s h_s(z)$ and $g(z)=\sum_{s=1}^m t_s g_s(z)$. Then both $h$ and $g$ are holomorphic in $\mathbb{P} \Delta(0;1)$ with $h(0)=g(0)=0,$
\begin{align*}
	\;\left(\frac{\partial h(0)}{\partial z_1},\ldots,\frac{\partial h(0)}{\partial z_n}\right)=(1,\ldots,1)\;\text{and}\; \left(\frac{\partial g(0)}{\partial z_1},\ldots,\frac{\partial g(0)}{\partial z_n}\right)=(0,\ldots,0).
\end{align*}

A simple computation shows that
\begin{align*}
\sum\limits_{l=1}^n\sum\limits_{j=1}^n\sum\limits_{k=1}^n\left|z_l\frac{\partial^2 h(z)}{\partial z_j\partial z_k}\right|=&\sum\limits_{l=1}^n\sum\limits_{j=1}^n\sum\limits_{k=1}^n\left|z_l\sum\limits_{s=1}^mt_s\frac{\partial^2 h_s(z)}{\partial z_j\partial z_k}\right|\\\leq&
\sum\limits_{s=1}^mt_s\sum\limits_{l=1}^n\sum\limits_{j=1}^n\sum\limits_{k=1}^n\left|z_l\frac{\partial^2 h_s(z)}{\partial z_j\partial z_k}\right|\\ \leq&
\sum\limits_{s=1}^m t_s\left(M-\sum\limits_{l=1}^n\sum\limits_{j=1}^n\sum\limits_{k=1}^n\left|z_l\frac{\partial^2 g_s(z)}{\partial z_j\partial z_k}\right|\right)\\=&
M-\sum\limits_{l=1}^n\sum\limits_{j=1}^n\sum\limits_{k=1}^n\left|z_l\sum\limits_{s=1}^m t_s\frac{\partial^2 g_s(z)}{\partial z_j\partial z_k}\right|\\=&
M-\sum\limits_{l=1}^n\sum\limits_{j=1}^n\sum\limits_{k=1}^n\left|z_l\frac{\partial^2 g(z)}{\partial z_j\partial z_k}\right|.
\end{align*}
This shows that $f\in \mathcal{B}_{\mathcal{H}_n^0}(M)$ and completes the proof.
\end{proof}
Having established the convex structure of the class $\mathcal{B}_{\mathcal{H}_{n}^{0}}(M)$ in the previous result, we now seek to provide growth theorem for these mappings. The following theorem provides sharp upper and lower bounds for the modulus of $f(z)$, thereby characterizing the geometric distortion of pluriharmonic mappings within the unit polydisk $\mathbb{P}\Delta(0;1)$. These estimates generalize the classical growth results for planar harmonic mappings to the multidimensional setting
\begin{theo}Let $f=h+\ol g\in \mathcal{B}_{\mathcal{H}_n^0}(M)$ and and be given by (\ref{Eq 1.7}). Then
\begin{align*}
n||z||_{\infty}-\frac{Mn^2}{2}||z||_{\infty}^2\leq |f(z)|\leq n||z||_{\infty}+\frac{Mn^2}{2}||z||_{\infty}^2.
\end{align*}
The inequality is sharp.
\end{theo}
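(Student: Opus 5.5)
Fix $z\in\mathbb{P}\Delta(0;1)\setminus\{0\}$, set $r=\|z\|_\infty$ and $\zeta=z/r$, so that $t\zeta\in\mathbb{P}\Delta(0;1)$ for $0\le t\le r$. Consider the linear part $L(z)=\sum_{j=1}^n z_j$ and the remainder
\begin{align*}
R(z)=\bigl(h(z)-L(z)\bigr)+\overline{g(z)}.
\end{align*}
Then $f=L+R$, with $|L(z)|\le nr$. The two-sided estimate will follow from the triangle inequality once $|R(z)|\le \tfrac{Mn^2}{2}r^2$ is known; the lower bound should be read as $|f(z)|\ge |L(z)|-|R(z)|$. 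Note that $|L(z)|$ can be smaller than $nr$ (e.g.\ $z_1=-z_2$), so the lower bound really holds only along the diagonal direction. I would prove the upper bound in general and state the lower bound for $z$ with all $z_j$ of the same argument and modulus $r$, where $|L(z)|=nr$. I will flag this as a point to check against the author's intent.

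To bound $R$, I follow the method of the proof of Theorem \ref{Th-1.4}. Since $h-L$ and $g$ vanish to order two at $0$, write
\begin{align*}
h(z)-L(z)=\int_0^r \sum_{k=1}^n \zeta_k\Bigl(\frac{\partial h(t\zeta)}{\partial z_k}-1\Bigr)\,dt,
\qquad
\frac{\partial h(t\zeta)}{\partial z_k}-1=\int_0^t \sum_{j=1}^n \zeta_j\,\frac{\partial^2 h(s\zeta)}{\partial z_j\partial z_k}\,ds,
\end{align*}
and similarly for $g$, without the $-1$. Hence
\begin{align*}
|R(z)|\le \int_0^r\!\!\int_0^t \sum_{j,k}\Bigl(\Bigl|\frac{\partial^2 h(s\zeta)}{\partial z_j\partial z_k}\Bigr|+\Bigl|\frac{\partial^2 g(s\zeta)}{\partial z_j\partial z_k}\Bigr|\Bigr)\,ds\,dt .
\end{align*}

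The key step is bounding the inner double sum at the point $s\zeta$ by roughly $nM$. The defining inequality of $\mathcal{B}_{\mathcal{H}_n^0}(M)$ controls
\begin{align*}
\sum_{l}|w_l|\sum_{j,k}\bigl(|h_{jk}(w)|+|g_{jk}(w)|\bigr)\le M
\end{align*}
at every point $w$. At $w=s\zeta$ some coordinate has $|w_l|=s$, but that alone only gives $\sum_{j,k}(\cdots)\le M/s$, which integrates to $Mr$, a linear rather than quadratic term. This is the main obstacle. To fix it I would use Lemma \ref{Lem1} in the way it is used in the proof of Theorem \ref{Th-1.4}: the functions $w_l h_{jk}(w)/M$ and $w_l g_{jk}(w)/M$ are bounded by $1$ and vanish at $0$, so they are at most $\|w\|_\infty$. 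Applying the Schwarz lemma in the variable $w_l$ alone (dividing by $w_l$, via a slice argument) then yields $\sum_{j,k}\bigl(|h_{jk}(w)|+|g_{jk}(w)|\bigr)\le M$ on the polydisk, up to a factor depending on how the triple sum is split. With the factor $n$ coming from $\sum_l$ and the bound $\sum_{j,k}|\zeta_j\zeta_k|\le n^2$ (since $|\zeta_j|\le1$), I expect the double integral to give $\int_0^r\!\int_0^t (\text{const})\,ds\,dt$, with the constant chosen to produce exactly $\tfrac{Mn^2}{2}r^2$.

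For sharpness I would take
\begin{align*}
f_0(z)=\sum_j z_j+c\Bigl(\sum_j z_j\Bigr)^2,
\end{align*}
with $c$ chosen so that $f_0\in\mathcal{B}_{\mathcal{H}_n^0}(M)$, and evaluate at $z=(r,\dots,r)$. There $f_0=nr+cn^2r^2$, which attains the upper bound if $c=M/2$ is admissible. Otherwise I would adjust $c$ and report the constant that the membership computation actually permits, noting any discrepancy with the stated $\tfrac{Mn^2}{2}$.
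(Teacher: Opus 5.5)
The real gap is in your key step, where you leave the constant to be ``chosen'' afterwards. Suppose you run it as described, applying the Schwarz lemma separately to $w_lh_{jk}/M$ and $w_lg_{jk}/M$. The slice argument then gives only $|h_{jk}|\le M$ and $|g_{jk}|\le M$. Hence $\sum_{j,k}|\zeta_j\zeta_k|(|h_{jk}|+|g_{jk}|)\le 2n^2M$ and $|R(z)|\le n^2M\|z\|_\infty^2$, which is a factor $2$ short of the target.

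You need the Schwarz lemma for a single holomorphic function that controls $h$ and $g$ jointly. Put $S(w)=\sum_{j,k}(|h_{jk}(w)|+|g_{jk}(w)|)$ and fix $w^0$. Choose unimodular $\varepsilon_{jk},\delta_{jk}$ with $\varepsilon_{jk}h_{jk}(w^0)=|h_{jk}(w^0)|$ and $\delta_{jk}g_{jk}(w^0)=|g_{jk}(w^0)|$. Apply the one-variable Schwarz lemma in $w_l$ to $w_l\sum_{j,k}(\varepsilon_{jk}h_{jk}+\delta_{jk}g_{jk})$, whose modulus is at most $|w_l|S(w)\le M$. This gives $S\le M$.

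You can do better. $S$ is plurisubharmonic, and $\sum_l|w_l|=n\rho$ on the torus $|w_1|=\dots=|w_n|=\rho$. Iterating the maximum principle on $\|w\|_\infty\le\rho$ and letting $\rho\to1$ gives $S\le M/n$ on $\mathbb{P}\Delta(0;1)$. So the sum over $l$ is a gain, not an extra factor $n$. Your double integral then yields $|R(z)|\le\frac{M}{2n}\|z\|_\infty^2$, which proves the stated upper bound with room to spare.

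Your suspicions about the rest of the statement are justified.
\begin{itemize}
\item The lower bound is false for $n\ge2$. The function $z_1+z_2$ lies in $\mathcal{B}_{\mathcal{H}_2^0}(M)$ for every $M$ and vanishes at $(r,-r)$, while $2r-2Mr^2>0$ for small $r$. The correct statement is $|f(z)|\ge|\sum_jz_j|-\frac{M}{2n}\|z\|_\infty^2$, which contains your diagonal version.
\item Sharpness also fails. The function $\sum_jz_j+c(\sum_jz_j)^2$ is in the class if and only if $2|c|n^3\le M$, so $c=M/2$ is inadmissible for $n\ge2$. By the bound above, $\frac{Mn^2}{2}$ is never attained at $z\ne0$. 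The sharp constant is $\frac{M}{2n}$, attained by $c=\frac{M}{2n^3}$ at $(r,\dots,r)$.
\end{itemize}

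For comparison, the paper never works with your remainder $R$.
\begin{itemize}
\item Via Theorem \ref{Th-1.3} it puts $F_\varepsilon=h+\varepsilon g$ in $\mathcal{B}_n(M)$.
\item Lemma \ref{Lem1} gives $|\zeta_j\,\partial^2F_\varepsilon(t\zeta)/\partial z_j\partial z_k|\le M$, hence $|\partial F_\varepsilon(z)/\partial z_k-1|\le nM\|z\|_\infty$.
\item It takes the supremum over $|\varepsilon|=1$ to get $|\partial h/\partial z_k|+|\partial g/\partial z_k|\le1+nM\|z\|_\infty$. This rotation is how it handles $h$ and $g$ jointly.
\item It then integrates once along the ray.
\end{itemize}
That validly proves the stated upper bound. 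However, the paper's lower-bound step writes $|f(z)|$ as an integral of moduli and treats every $|z_k|/\|z\|_\infty$ as $1$, so it does not go through. Its extremal functions $\sum_jz_j\pm\frac M2(\sum_jz_j)^2$ are not in the class for $n\ge2$. Your $L+R$ splitting is the better framework: once the joint Schwarz step is in place, it gives the correct form of the lower bound and the sharp constant.
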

\begin{proof} Let $f=h+\ol g\in \mathcal{B}_{\mathcal{H}_n^0}(M)$. Then from Theorem 2.1, we see that the function $F_{\varepsilon}=h+g$
belongs to $\mathcal{B}_{n}(M)$ for each $\varepsilon (|\varepsilon|=1)$, and further
\begin{align}\label{Th8:1}
\sum\limits_{l=1}^n\sum\limits_{j=1}^n\sum\limits_{k=1}^n\left|z_l\frac{\partial^2 F_{\varepsilon}(z)}{\partial z_j\partial z_k}\right|\leq M
\end{align}
for all $z=(z_1,z_2,\ldots,z_n)\in \mathbb{P} \Delta(0;1)$. 
Let $z=(z_1,z_2,\ldots,z_n)\in \mathbb{P} \Delta(0;1)$ such that $z\neq 0$. It is evident that $||z||_{\infty}<1$. Because $\mathbb{P} \Delta(0;1)$ is a convex domain, 
$\xi(t)=\frac{z}{||z||_{\infty}}t \in \mathbb{P} \Delta(0;1)$ for $0 \le t \le ||z||_{\infty}$. Let $\varphi_j: [0,||z||_{\infty}]\to \mathbb{C}$ be defined by 
\begin{align*}
\varphi_k(t)=\frac{\partial F_{\varepsilon}(\xi(t))}{\partial z_k}
\end{align*}
for $k=1,2,\ldots,n$. Clearly, $\varphi_k(||z||_{\infty})=\frac{\partial F_{\varepsilon}(z)}{\partial z_k}$ and $\varphi_k(0)=\frac{\partial F_{\varepsilon}(0)}{\partial z_k}=1$ for $k=1,2,\ldots,n$. 
By a similar argument to that of Theorem \ref{Th-1.4}, we obtain
\begin{align}\label{Th8:1.1}
\left|\left|\frac{\partial F_{\varepsilon}(z)}{\partial z_k}\right|-1\right|\leq\left|\frac{\partial F_{\varepsilon}(z)}{\partial z_k}-1\right|\leq&\int\limits_{0}^{||z||_{\infty}}\left( \sum\limits_{j=1}^n\left|\frac{\partial^2 F_{\varepsilon}(\xi(t))}{\partial z_j \partial z_k}.\frac{z_j}{||z||_{\infty}}\right|\right) dt\\\leq& nM||z||_{\infty}\nonumber
\end{align}
for $k=1,2,\ldots,n$. Since \eqref{Th8:1.1} remains valid at $z=0$, we have
\begin{align}\label{Th8:1.2}
1-nM||z||_{\infty}\leq \left|\frac{\partial F_{\varepsilon}(z)}{\partial z_k}\right|=\left|\frac{\partial h(z)}{\partial z_k}+\varepsilon \frac{\partial g(z)}{\partial z_k}\right|\leq 1+nM||z||_{\infty}
\end{align}
for $k=1,2,\ldots,n$.
Since $\varepsilon (|\varepsilon|=1)$ is arbitrary, it follows from (\ref{Th8:1.2}) that
\begin{align}\label{Th8:1.3}
\left|\frac{\partial h(z)}{\partial z_k}\right|+\left|\frac{\partial g(z)}{\partial z_k}\right|\leq 1+nM||z||_{\infty}
\;\;\text{and}\;\;\left|\frac{\partial h(z)}{\partial z_j}\right|-\left|\frac{\partial g(z)}{\partial z_j}\right|\geq 1-nM||z||_{\infty}.
\end{align}
for $k=1,2,\ldots,n$.
Let $\varphi: \left[0,||z||_{\infty}\right]\to \mathbb{C}$ be defined by 
\begin{align*}
\varphi(t)=f(\xi(t))=h(\xi(t))+\ol{g(\xi(t))}.
\end{align*}
We observe that $\varphi(\|z\|_{\infty}) = f(z)$ and $\varphi(0) = f(0) = 0$. Note that
\begin{align*}
\frac{d \varphi(t)}{dt}=\frac{d f(\xi(t))}{dt}=\sum\limits_{j=1}^n\frac{\partial h(\xi(t))}{\partial z_j}.\frac{z_j}{||z||_{\infty}}+\sum\limits_{j=1}^n\ol{\frac{\partial g(\xi(t))}{\partial z_j}}.\frac{\ol z_j}{||z||_{\infty}}.
\end{align*}
\noindent Since 
\begin{align*}
	\varphi(1)-\varphi(0)=\int_{0}^1 \frac{d \varphi(t)}{dt}dt,
\end{align*} from (\ref{Th8:1.3}), we have
\begin{align*}
|f(z)|=&\left|\int\limits_{0}^{||z||_{\infty}}\left( \sum\limits_{k=1}^n\frac{\partial h(\xi(t))}{\partial z_k}.\frac{z_k}{||z||_{\infty}}+\sum\limits_{j=1}^n\ol{\frac{\partial g(\xi(t))}{\partial z_k}}.\frac{\ol z_k}{||z||_{\infty}}\right) dt\right|\\\leq&
\int\limits_{0}^{||z||_{\infty}}\sum\limits_{k=1}^n\left( \left|\frac{\partial h(\xi(t))}{\partial z_k}\right|+\left|\frac{\partial g(\xi(t))}{\partial z_k}\right|\right) \frac{|z_k|}{||z||_{\infty}} dt\\\leq&
\int\limits_{0}^{||z||_{\infty}}n(1+nMt)dt\\=&n||z||_{\infty}+\frac{Mn^2}{2}||z||_{\infty}^2.
\end{align*}
Moreover, using (\ref{Th8:1.3}), we obtain
\begin{align*}
|f(z)|=&\int\limits_{0}^{||z||_{\infty}}\left( \sum\limits_{k=1}^n\left|\frac{\partial h(\xi(t))}{\partial z_k}.\frac{z_k}{||z||_{\infty}}+\sum\limits_{j=1}^n\ol{\frac{\partial g(\xi(t))}{\partial z_k}}.\frac{\ol z_k}{||z||_{\infty}}\right|\right) dt\\\geq&
\int\limits_{0}^{||z||_{\infty}}\sum\limits_{k=1}^n\left( \left|\frac{\partial h(\xi(t))}{\partial z_k}\right|-\left|\frac{\partial g(\xi(t))}{\partial z_k}\right|\right) \frac{|z_k|}{||z||_{\infty}} dt\\\geq&
\int\limits_{0}^{||z||_{\infty}}n(1-nMt)dt\\=&n||z||_{\infty}-\frac{Mn^2}{2}||z||_{\infty}^2.
\end{align*}
Thus the desired inequalities are established.\vspace{2mm}

To show the inequalities are sharp, we consider the functions
\begin{align*}
f_1(z)=\sum\limits_{j=1}^n z_j+\frac{M}{2}\left(\sum\limits_{j=1}^n z_j\right)^2\quad \text{and} \quad f_2(z)=\sum\limits_{j=1}^n z_j-\frac{M}{2}\left(\sum\limits_{j=1}^n z_j\right)^2.
\end{align*}
For the point $z=(r,r,\ldots,r)$, where $r<1$, we find that
\begin{align*}
|f_1(z)|=nr+\frac{Mn^2}{2}r^2=n||z||_{\infty}+\frac{Mn^2}{2}||z||_{\infty}^2\quad \text{and} \quad |f_2(z)|=n||z||_{\infty}-\frac{Mn^2}{2}||z||_{\infty}^2,
\end{align*}
which shows that the inequalities are sharp. 
\end{proof}

\vspace{5mm}

\noindent\textbf{Conflict of interest:} The authors declare that there is no conflict  of interest regarding the publication of this paper.\vspace{1.2mm}

\noindent {\bf Funding:} Not Applicable.\vspace{1.2mm}

\noindent\textbf{Data availability statement:}  Data sharing not applicable to this article as no datasets were generated or analysed during the current study.\vspace{1.2mm}

\noindent {\bf Authors' contributions:} All the authors have equal contributions in preparation of the manuscript.

\end{document}